\def\cC{\mathcal{C}}
\newtheorem{theorem}{Theorem}[section]
\newtheorem{lemma}[theorem]{Lemma}
\newtheorem{proposition}[theorem]{Proposition}
\DeclareMathSymbol{\varnothing}{\mathord}{AMSb}{"3F}
\begin{document}

\title{Absolutely Continuous Invariant Measures of Piecewise Linear
Lorenz Maps}
\author{Yi Ming DING, Ai Hua FAN and Jing Hu YU}

\medskip
\date{}
\maketitle

\pagestyle{myheadings}

\markboth{ Acims of piecewise linear Lorenz maps}{\sc Yi-Ming Ding
Ai-Hua Fan and Jing-Hu Yu}

\begin{abstract}
Consider  piecewise  linear Lorenz maps on $[0,\ 1]$ of the following form
\[ f_{a,b,c}(x)=\left \{ \begin{array}{ll}
ax+1-ac & x \in [0,  c) \\
b(x-c) & x \in (c,  1].
\end{array}
\right.
\]
We prove that $f_{a,b,c}$ admits an absolutely continuous invariant
probability measure (acim) $\mu$ with respect to the Lebesgue
measure if and only if $f_{a,b,c}(0) \le f_{a,b,c}(1)$, i.e.
$ac+(1-c)b \ge 1$. The acim is unique and ergodic unless $f_{a,b,c}$
is conjugate to a rational rotation. The equivalence between the
acim and the Lebesgue measure is also fully investigated via the
renormalization theory.
\end{abstract}




\vspace{2em}

\section{ Introduction }
\medskip

Lorenz maps are one-dimensional maps with a single singularity,
which  arise  as  Poincar\'e return maps for flows on branched
manifolds that model the strange attractors of Lorenz systems. A
{\em Lorenz map} on the interval $I:=[0,1]$ is a map $f:I \to I$
such that for some critical point
$c\in (0,1)$ we have\\
\indent
(i)   $f$ is continuous and strictly increasing on $[0,c)$ and on $(c,1]$;\\
\indent
(ii)  $\lim_{x \uparrow c}f(x)=1$, $\lim_{x \downarrow c}f(x)=0$.\\
A Lorenz map $f$ is said to be {\em piecewise linear} if it is
linear on both intervals $[0, c)$ and $(c, 1]$. Such a map is of the
form
\begin{equation}\label{LinearLorenzMap0}
f_{a,b,c}(x)= \left \{ \begin{array}{ll}
ax+1-ac & x \in [0,  c) \\
b(x-c) & x \in (c,  1],
\end{array}
\right.
\end{equation}
where $a>0, b>0$, $0<c<1$, $ac\le 1$ and $b(1-c)\le 1$.

Let $\beta>1$. The map $T_{\beta}(x)=\beta x \ \ (\mod  1)$ is the
well  known $\beta$-shift related to $\beta$-expansion (\cite{R}).
Assume $0\le \alpha <1$. The transformation $T_{\beta, \alpha}$
defined by
$$
T_{\beta, \alpha}(x)=\beta x + \alpha \ \ \ (\!\!\mod  1)
$$
is a natural generalization of  $\beta$-shift. There are many works
done  on $T_{\beta,\alpha}$ (see \cite{G, Go1, Ho, Pal, P1, P, P3,
W}). When $1<\beta \le 2$, $T_{\beta, \alpha}$ is a piecewise linear
Lorenz map. In fact, $T_{\beta, \alpha}=f_{\beta, \beta, c}$ with
$c=(1-\alpha)/\beta$. Recently, Dajani et al \cite{DHK} studied
another variation $S_{\beta,\alpha}$ of $\beta$-shift. For $0 <
\alpha < 1$ and $1 < \beta < 2$,
\begin{equation}\label{LinearLorenzMap10} S_{\beta,\alpha}(x)=\left \{ \begin{array}{ll}
\beta x & x \in [0,\  1/\beta) \\
\alpha (x-1/\beta) & x \in (1/\beta,\  1],
\end{array}
\right.
\end{equation}
which is the piecewise linear Lorenz map  $f_{\beta,
\alpha,1/\beta}$.

Lorenz maps arise as return maps to a cross-section of a semi-flow
on a two dimensional branched manifold (cf. \cite{ABS}, \cite{GW},
\cite{Wi}). The flow lines starting from $c$ never return to $I$. So
usually the map is considered not defined at $c$ (cf. \cite{GS}).
But it is also convenient to regard $c$ as two points $c+$ and $c-$,
the right and left  of $c$, so that the Lorenz map is a continuous
map defined on the disconnected compact space $[0,\ c-] \bigcup \
[c+,\ 1]$. Different dynamical aspects of Lorenz maps are studied in
the literatures such as rotation interval, asymptotic periodicity,
topological entropy and renormalization etc (see \cite{ALMT},
\cite{Dy}, \cite{DF1}, \cite{GS}). In this paper we shall study the
absolutely continuous invariant probability measures (acim for
short) of piecewise linear Lorenz maps. The existence of acim and
the equivalence of acim with respect to the Lebesgue measure are
studied.


The Lebesque measure is clearly quasi invariant under $f_{a,b,c}$.
Let $P_{a,b,c}$ be the associated Perron-Frobenius operator and let
$$ A_n (h)=\frac{1}{n}\sum_{i=0}^{n-1}P_{a,b,c}^ih,\ \ \ \ \ \ h\in
L^1([0,1]).$$ Our results are stated in the following two theorems.

\medskip

\noindent {\bf Theorem A.} \ {\it The piecewise linear Lorenz map
$f_{a,b,c}$ admits an absolutely continuous  invariant probability
measure $\mu$ with respect to the Lebesgue measure if and only if
$f_{a,b,c}(0) \le f_{a,b,c}(1)$, i.e. $ac+b(1-c) \ge 1$. More
precisely,

\begin{enumerate}

\item If $ac+b(1-c)=1$ and $ \log a / \log b $ is rational, then
there exists positive integer $n$ such that $f^n_{a,b,c}(x)=x$ for
all $x \in [0, 1]$. Consequently, for each density $g$ on $[0,\ 1]$,
$A_n(g)$ is the density of an invariant measure of $f_{a,b,c}$.

\item If $ac+b(1-c)=1$ and $ \log a / \log b $ is irrational,
then the acim is unique and its density is bounded from below and
from above by the two constants ${\displaystyle
\left(\frac{a}{b}\right)^4}$ and
 ${\displaystyle \left(\frac{a}{b}\right)^{-4}}$.

\item If $ac+b(1-c)>1$, then the acim is unique and its density is
of bounded variation.

\item If $ac+b(1-c)<1$, then
$f_{a,b,c}$ admits no acim.

\end{enumerate}

}
\medskip
It is well known to Lasota and Yorke (\cite{LY1}) that a strongly
expanding interval map $f$ (i.e. $|f'(x)|>\lambda>1$ except finite
points) admits an acim with respect to the Lebesgue measure. It is
also known that a piecewise linear Lorenz map with a fixed point
also admits an acim with respect to the Lebesgue measure (cf.
\cite{DHK,D}). These results don't apply to the Lorenz maps defined
by (\ref{LinearLorenzMap0}) which, in general, are not strongly
expanding and admit no fixed point.

Suppose that $f_{a,b,c}$ admits a unique acim  with respect to the
Lebesgue measure.  If $f_{a,b,c}$ is a homeomorphism (i.e., $ac
+b(1-c)=1$) with irrational rotation number, we shall see from the
proof of Theorem A  that the acim of $f_{a,b,c}$ is equivalent to
the Lebesgue measure. If $ac+b(1-c)>1$, the acim is not necessarily
equivalent to the Lebesgue measure, even if $f_{a,b,c}$ is strongly
expanding (i.e.$a>1$ and $b>1$).  For example, Parry \cite{P3}
proved that the acim of symmetric piecewise linear Lorenz map
$f_{a,a,1/2}$ is not equivalent to the Lebesgue measure if and only
if $1<a<\sqrt{2}$. We point out that the support of the acim of
$T_{\beta,\alpha}$ was studied in \cite{G, Ho}.

Assume that $ac+b(1-c)>1$. As we shall see in Lemma
\ref{transitive},  the acim of $f_{a,b,c}$ is equivalent to the
Lebesgue measure if and only if $f_{a,b,c}$ is transitive, i.e.
$\bigcup_{n \ge 0}f_{a,b,c}^n(U)$ is dense in $I$ for each non-empty
open set $U\subset I$. In general, the transitivity of a Lorenz map
is not easy to check. Palmer \cite{Pal} studied the transitivity of
$T_{\beta,\alpha}$ by using so-called primary cycle (see also
\cite{G}). Alves et al introduced a topological invariant to study
the transitivity of $T_{\beta,\alpha}$ (\cite{AFS1}). The conditions
of both primary cycle and the topological invariant of Alves et al
are difficult to check too. We will provide a rather simple
criterion of the transitivity for the piecewise linear Lorenz maps
$f_{a,b,c}$ with $ac+b(1-c)>1$.

Let us describe our criterion. Assume $ac+b(1-c)>1$. Then
$f_{a,b,c}$ admits periodic points, because it admits positive
topological entropy (\cite{ALMT}). Let $\kappa$ be the minimal
period of the periodic points of $f_{a,b,c}$. Assume $2\le
\kappa<\infty$. Then $f_{a,b,c}$ admits a unique $\kappa-$periodic
orbit. Let $P_L$ and $P_R$ be adjacent $\kappa-$periodic points such
that $c \in [P_L,\ P_R]$. It can be proved that $f_{a,b,c}^\kappa$
is linear on $[P_L,\ c)$ and on $(c,\ P_R]$ (Lemma \ref{minimal
period}). Write
$$A:=f^{\kappa}_{a,b,c}(c+),\ \ \ \ \ \ \ B:=f^{\kappa}_{a,b,c}(c-).$$
That $\kappa=2$ means $f_{a,b,c}$ has no fixed point,
$f_{a,b,c}(0)=A<B=f_{a,b,c}(1)$ and $c \in [A,\ B]$. When
$\kappa=2$, let
 $$M:=\min\left\{\frac{c-A}{B-c}, \frac{B-c}{c-A} \right\}.$$

\medskip
\noindent {\bf Theorem B.} \ {\it Suppose $ac + b(1-c)>1$.  If
$\kappa=1$, then the acim of $f_{a,b,c}$ is  equivalent to the
Lebesgue measure. If $\  2\le \kappa<\infty$, then the acim of
$f_{a,b,c}$ is equivalent to the Lebesgue measure if and only if
$$[A,\ B]\backslash [P_L, \ P_R]\neq \emptyset \ \ \ \  or\ \ \ \
[A,\ B]=[P_L, \ P_R].$$ In particular, when $\kappa=2$, the acim of
$f_{a,b,c}$ is
 equivalent to the Lebesgue measure if and only if
  \[\left \{ \begin{array}{ll}
ab>1+M & \mbox{\rm if}\ \ \ M<1 \\
ab\ge 2 & \mbox{\rm if}\ \ \  M=1.
\end{array}
\right.
\]
 }

\medskip
See Figure $1$ for a piecewise linear map whose  acim is not
equivalent to the Lebesgue measure.

\begin{figure}
\centering
\includegraphics[width=0.8\textwidth]{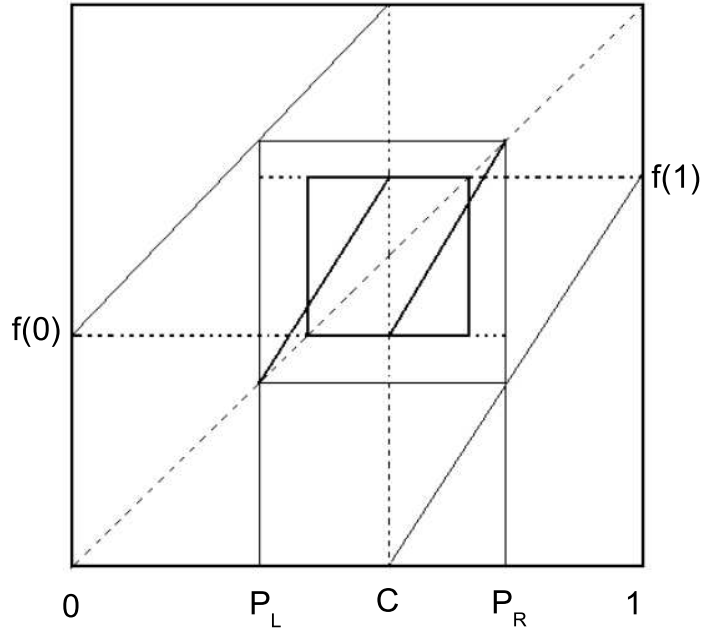}
\caption{A piecewise linear Lorenz map with $\kappa=2$, and
$[f^2(c+),\ f^2(c-)]=[f(0),\ f(1)] \subseteq (P_L,\ P_R)$, whose
acim is not equivalent to the Lebesgue measure.}
\end{figure}

Parry \cite{P3}  proved that the acim of $f_{a,a,1/2}$ ($1<a \le 2$)
is not equivalent to the Lebesgue measure if and only if $1<a < \sqrt{2}$. This may be obtained  as a special case of
Theorem B.




We shall collect some basic useful facts in $\S 2$, including
rotation number, Lyapunov exponent, Frobenius-Perron operator and
renormalization. Theorem A is proved in $\S 3$ and Theorem B in $\S
4$. Densities of some piecewise linear Lorenz maps will be presented
in $\S 5$.

\setcounter{equation}{0}

\section{ Preliminaries }

In this section, we present some facts concerning the rotation
number, Lyapunov exponent, Frobenius-Perron operator and the
renormalization of Lorenz maps, which will be useful later.

\subsection{ Rotation number and  Lyapunov exponent}\ \ \ \

 We denote by $e:
\mathbb{R} \to \mathbb{S}^1=\{z\in \mathbb{C} :|z|=1\}$ the natural
covering map $e(x)=\exp(2\pi ix)$. Let $f$ be a Lorenz map, not
necessarily linear. There exists a map $F: \mathbb{R} \to
\mathbb{R}$ such that $e\circ F=f \circ e$ and $F(x+1)=F(x)+1$. $F$
is called a degree one lifting of $f$. Furthermore, if $F(0)=f(0)$,
then there exists a unique such lifting (cf. \cite{ALMT}).

The {\em rotation number} of $f$ at $x$ is defined by
$$
\rho(x)= \limsup_{n \to \infty} \frac{F^n(x)-x}{n}.
$$
It is known that the set of all rotation numbers $\rho(x)$ of $f$ is
an interval and that this interval is reduced to a singleton when
$f(0)=f(1)$ (\cite{Ito}). The rotation number is tightly relate to
the number of returns of $x$ into the interval $(c,\ 1]$, defined by
\begin{equation}\label{Returnnumber}
m_n(x)= \# \left\{0\le i <n: f^i(x) \in (c,\ 1] \right\}
\end{equation}

\begin{lemma}  \label{rotation-frequence}
Let $f$ be a  Lorenz map (not necessarily piecewise linear). Let $F$
be the unique degree one lifting map of $f$ such that $F(0) = f(0)$.
Then for any $n \ge 1$ and any $x \in [0,\ 1]$ we have
$$
F^n(x)=m_n(x)+f^n(x).
$$

\end{lemma}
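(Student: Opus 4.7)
The plan is to prove the identity by induction on $n$, the heart of the argument being an explicit description of the lift $F$ restricted to the unit interval.

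First I would unpack what $F$ looks like on $[0,1]$. Since $F(0)=f(0)$ and $e\circ F=f\circ e$, on $[0,c)$ the relation $F(x)\in f(x)+\Z$ combined with continuity pins $F(x)=f(x)$. The normalization $F(x+1)=F(x)+1$ then forces $F$ to be monotone non-decreasing (this is what makes the lift unique), so on $(c,1]$ the only consistent choice is $F(x)=f(x)+1$: indeed $F(c^-)=1$ and $F$ must continue at values $\geq 1$ on $(c,1]$ while still projecting to $f$ under $e$. Combining the two pieces gives the compact formula
\[
F(x)=f(x)+\mathbb{I}_{(c,1]}(x)=f(x)+m_{1}(x)\qquad\text{for }x\in[0,1],
\]
which is exactly the $n=1$ case of the claim.

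For the inductive step I assume $F^{n}(x)=m_{n}(x)+f^{n}(x)$. Writing $m_n:=m_n(x)\in\Z$ and using the translation equivariance $F(y+k)=F(y)+k$ for every $k\in\Z$, I compute
\[
F^{n+1}(x)=F\!\bigl(m_{n}+f^{n}(x)\bigr)=m_{n}+F\!\bigl(f^{n}(x)\bigr).
\]
Since $f^{n}(x)\in[0,1]$, the base case applied at the point $f^{n}(x)$ gives
\[
F\!\bigl(f^{n}(x)\bigr)=f^{n+1}(x)+\mathbb{I}_{(c,1]}\!\bigl(f^{n}(x)\bigr),
\]
and the bookkeeping identity
\[
m_{n+1}(x)=m_{n}(x)+\mathbb{I}_{(c,1]}\!\bigl(f^{n}(x)\bigr),
\]
which is immediate from the definition of $m_n$ as a count of visits to $(c,1]$, closes the induction.

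There is no serious obstacle; the only subtle point is establishing the explicit form of $F$ on $[0,1]$, which is delicate because $f$ itself is discontinuous at $c$. The argument relies on the fact that the lift is required to be monotone (equivalently, the unique lift with $F(0)=f(0)$), so that the $+1$ jump is forced across the singularity. Once this is in hand, the rest is a clean induction using translation equivariance and the additivity of the counting function $m_n$.
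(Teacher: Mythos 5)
Your proof is correct and follows essentially the same route as the paper: establish $F(x)=f(x)+\mathbf{1}_{(c,1]}(x)$ on $[0,1]$ as the base case, then induct using the translation equivariance $F(y+k)=F(y)+k$. The only difference is that you justify the explicit form of $F$ on $[0,1]$ in more detail, whereas the paper simply records it.
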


\proof The proof of this Lemma would be found in the literatures, we
give a proof for completeness. We prove it by induction. First note
that
\[f(x)= \left \{ \begin{array}{ll}
F(x) & x \in [0,  c) \\
F(x)-1 & x \in (c,  1]
\end{array}
\right.
\]
which implies the desired equality for $n=1$:
$$
F(x)={\bf 1}_{(c,\ 1]}(x)+f(x).
$$
Suppose now that the equality is true for an arbitrary $n$. Since
$F(x+k)=F(x)+k$ for all positive integers $k$, by the hypothesis of
induction we have
$$
F^{n+1}(x)=F(F^{n}(x))= \sum_{i=0}^{n-1}{\bf 1}_{(c,\ 1]}(f^i(x))+
F(f^n(x)).
$$
According to what we have proved for $n=1$, we get $F(f^n(x)) = {\bf
1}_{[c, 1)}(f^n(x)) + f^{n+1}(x)$ so that
$$
F^{n+1}(x)=\sum_{i=0}^{n}{\bf 1}_{(c,\
1]}(f^i(x))+f^{n+1}(x)=m_{n+1}(x)+f^{n+1}(x).
$$
$\hfill\Box$

Write
$$
C_f=\bigcup_{n \ge 0}f^{-n}(c).
$$
When $f=f_{a,b,c}$ we write $C_{a,b,c}:=C_{f_{a,b,c}}$.  The {\em
Lyapunov exponent} of $f$ at $x \notin C_f$ is defined by
 $$
 \lambda(f,x)=\limsup_{n \to
\infty}\frac{1}{n} \log (f^n)'(x).
$$
For piecewise linear Lorenz map $f_{a,b,c}$,
\begin{equation}\label{RotationLiapunov}
\frac{d f_{a,b,c}^n(x)}{d x} =  a^{n- m_n(x)} b^{m_n(x)}, \quad
\forall x \in C_{a,b,c}.
\end{equation}

For any linear Lorenz map such that $f_{a,b,c}(0) = f_{a,b,c}(1)$,
its rotation number and its Lyapunov exponent are determined in the
following way.


\begin{lemma}  \label{Rotation-Lyapunov}Let $f_{a,b,c}$ be a piecewise
linear Lorenz map such that $f_{a,b,c}(0)=f_{a,b,c}(1)$. Let $\rho$
be the rotation number of $f_{a,b,c}$. Then we have
 \begin{equation}\label{uniform}
 \lim_{n\to \infty}\sup_{x \in [0,1]}
\left|\frac{F^n(x)-x}{n}-\rho\right|=0.
\end{equation}
The rotational number $\rho$ is the solution of the equation
$$
a^{1-\rho}b^{\rho}=1.
$$
The number $\rho$ is rational if and only if $ \log a / \log b $ is
rational.
 Furthermore, we have
$$\lambda(f_{a,b,c},x)=0 \qquad \qquad (\forall x \notin
C_{a,b,c}).
$$
\end{lemma}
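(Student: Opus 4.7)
The plan is to exploit the fact that when $f_{a,b,c}(0) = f_{a,b,c}(1)$, identifying the endpoints $0$ and $1$ turns $f_{a,b,c}$ into an orientation-preserving homeomorphism of the circle $\mathbb{S}^1$, so classical Poincar\'e theory applies. The lifting $F$ is then a strictly increasing homeomorphism of $\mathbb{R}$ commuting with integer translations, and Poincar\'e's theorem gives a well-defined rotation number $\rho$ together with the sharp estimate $|F^n(x) - x - n\rho|\le 1$ uniformly in $x$. Dividing by $n$ yields (\ref{uniform}).

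Next, I would combine (\ref{uniform}) with Lemma \ref{rotation-frequence}: since $F^n(x)-x = m_n(x) + (f^n(x)-x)$ and $f^n(x)-x$ is bounded, we obtain the uniform estimate $m_n(x)/n \to \rho$. The key identity $a^{1-\rho}b^\rho = 1$ then comes from integrating the derivative. Indeed, because $f_{a,b,c}$ is a degree-one circle homeomorphism, so is $f_{a,b,c}^n$, and the total variation of $f_{a,b,c}^n$ over one period is $1$, giving $\int_0^1 (f_{a,b,c}^n)'(x)\,dx = 1$. Using (\ref{RotationLiapunov}) this reads
\begin{equation*}
1 = \int_0^1 a^{\,n-m_n(x)}\,b^{\,m_n(x)}\,dx
   = a^{n(1-\rho)}\,b^{n\rho}\int_0^1 (b/a)^{\,m_n(x)-n\rho}\,dx.
\end{equation*}
Since $|m_n(x)-n\rho|=o(n)$ uniformly, the last integral is subexponential in $n$, so extracting the $n$-th root and letting $n\to\infty$ forces $a^{1-\rho}b^\rho = 1$.

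The rationality criterion is immediate: taking logarithms in $a^{1-\rho}b^\rho=1$ gives $\rho = \log a / \log(a/b)$, so $\rho\in\mathbb{Q}$ iff $\log a/\log b\in\mathbb{Q}$. For the Lyapunov exponent, for any $x\notin C_{a,b,c}$ we use (\ref{RotationLiapunov}) once more:
\begin{equation*}
\frac{1}{n}\log (f_{a,b,c}^n)'(x) = \Bigl(1-\frac{m_n(x)}{n}\Bigr)\log a + \frac{m_n(x)}{n}\log b \;\longrightarrow\; (1-\rho)\log a + \rho\log b = 0,
\end{equation*}
which is in fact a limit (not just a $\limsup$), and is independent of $x$.

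The principal obstacle is the justification that $f_{a,b,c}$ genuinely defines a circle homeomorphism of degree one, so that Poincar\'e's uniform bound and the normalization $\int_0^1 (f_{a,b,c}^n)'\,dx = 1$ are both available; once that structural fact is in place, the four assertions fall out essentially by bookkeeping on $m_n(x)$.
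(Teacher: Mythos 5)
Your argument is correct, and it reaches the key identity $a^{1-\rho}b^{\rho}=1$ by a route that is genuinely different in form from the paper's, though both ultimately rest on the same structural fact that $f_{a,b,c}$ is a degree-one circle homeomorphism. For the uniform convergence (\ref{uniform}) the paper does not invoke the sharp Poincar\'e bound $|F^n(x)-x-n\rho|\le 1$; it simply squeezes $F^n(x)$ between $F^n(0)$ and $F^n(0)+1$ and uses $\rho=\lim F^n(0)/n$ --- either way is fine, and your bound is the standard one. The real divergence is in proving $a^{1-\rho}b^{\rho}=1$: the paper sets $\lambda=\log a^{1-\rho}b^{\rho}$ and rules out $\lambda>0$ by a soft contradiction (some iterate $f^N$ would be piecewise expanding, impossible for a homeomorphism), then rules out $\lambda<0$ by running the same argument on $f^{-1}$; you instead make the homeomorphism property quantitative via $\int_0^1 (f^n)'\,dx=1$, factor out $a^{n(1-\rho)}b^{n\rho}$, observe that the remaining integral is subexponential because $|m_n(x)-n\rho|=o(n)$ uniformly (indeed it is bounded, by Lemma \ref{rotation-frequence} and your Poincar\'e estimate), and extract $n$-th roots. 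Your version is a single two-sided computation rather than two one-sided contradictions, and it dispenses with the inverse map; the paper's version requires slightly less bookkeeping. The rationality criterion and the vanishing of the Lyapunov exponent are then handled identically in both arguments. No gaps.
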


\proof   The uniform convergence follows from the observation
$$F^n(0)<F^n(x)<F^{n+1}(0) \qquad (\forall x \in [0,\ 1])$$
and the fact that ${\displaystyle \rho = \lim_{n \to \infty}
\frac{F^n(0)}{n}}$.

According to  Lemma \ref{rotation-frequence}, the rotation number
$\rho$ of $f_{a,b,c}$ is nothing but the frequency of visits to
$(c,\ 1]$ of any given point of $[0,\ 1]$. Since $f_{a,b,c}$ is
piecewise linear, for $ x \notin C_{a,b,c}$ we have
\begin{equation}\label{derivative-of-Sn}
 (f_{a,b,c}^n)'(x)= \prod_{i=0}^{n-1}
f'_{a,b,c}(f_{a,b,c}^i(x))=a^{n-m_n(x)}b^{m_n(x)}
\end{equation} where
$ m_n(x) $ is defined by (\ref{Returnnumber}).
 It follows
that
\begin{eqnarray*}
 \lambda(f_{a,b,c},x)
 =  \lim_{n \to \infty} \frac{1}{n} \log
a^{n-m_n(x)}b^{m_n(x)}= \log a^{1-\rho}b^{\rho}.
 \end{eqnarray*}

Let $\lambda= \log a^{1-\rho}b^{\rho}$. We will prove
$a^{1-\rho}b^{\rho}=1$ by showing $\lambda =0$, which implies that
$$
\rho=1+\frac{\log b}{\log a - \log b}.
$$
So $\rho$ is rational if and only if $ \log a / \log b $ is
rational. Suppose $\lambda>0$. According to Lemma
\ref{rotation-frequence}, (\ref{uniform})   and
(\ref{derivative-of-Sn}),   there exists a positive integer $N$ such
that
$$
(f_{a,b,c}^N)'(x)>1  \qquad (\forall x \notin C_{a,b,c}).
$$
This and  the piecewise linearity of $f_{a,b,c}$ imply that
$f_{a,b,c}^N$ is piecewise expanding. However, it is not possible
because $f_{a,b,c}^N$ is a homeomorphism. Thus $\lambda \le 0$. In
the same way, one proves $\lambda \ge 0$  by considering
$f_{a,b,c}^{-1}$. Hence we get $\lambda=0$. $\hfill\Box$

\begin{lemma}  \label{Rational-Rotation}
Let $f_{a,b,c}$ be a piecewise linear Lorenz map such that
$f_{a,b,c}(0)=f_{a,b,c}(1)$. Let $\rho$ be the rotation number of
$f_{a,b,c}$. If $\rho$ is irrational, then
\begin{equation}\label{ReturnRotation}
|m_n(x)-n \rho| \le 4 \ \ \ \ \ (\forall x \in [0,\ 1],\  \forall
n\ge 1).
 \end{equation}
\end{lemma}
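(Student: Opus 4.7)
The plan is to reduce the bound on $m_n(x)$ to the classical Poincar\'e estimate for circle homeomorphisms. Since $f_{a,b,c}(0)=f_{a,b,c}(1)$, after identifying the two one-sided values at $c$ (which both equal $0\sim 1$ on the circle), the map descends to a homeomorphism of $\R/\Z$, so its lift $F$ is a strictly increasing continuous bijection of $\R$ satisfying $F(x+1)=F(x)+1$. By Lemma \ref{rotation-frequence}, $m_n(x)=F^n(x)-f^n(x)$, and since both $x$ and $f^n(x)$ lie in $[0,1]$ we have $|x-f^n(x)|\le 1$. Thus it suffices to establish a uniform bound of the form $|F^n(x)-x-n\rho|\le C$ for some absolute constant $C$.

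The key step is proving $|F^n(x)-x-n\rho|\le 1$. Set $\phi_n(x):=F^n(x)-x$. First I would observe that $\phi_n$ is $1$-periodic (directly from $F(x+1)=F(x)+1$) and has oscillation at most $1$ on $[0,1]$: if $0\le x\le y\le 1$, monotonicity of $F^n$ combined with $F^n(y)\le F^n(x+1)=F^n(x)+1$ forces $F^n(y)-F^n(x)\in[0,1]$, whence $\phi_n(y)-\phi_n(x)\in[-1,1]$. Next I would exploit the cocycle identity $\phi_{kn}(x)=\sum_{j=0}^{k-1}\phi_n(F^{jn}x)$: by the oscillation bound, each summand differs from $\phi_n(x)$ by at most $1$, so $|\phi_{kn}(x)/(kn)-\phi_n(x)/n|\le 1/n$. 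Sending $k\to\infty$ and invoking the uniform convergence $\phi_{kn}(x)/(kn)\to\rho$ supplied by Lemma \ref{Rotation-Lyapunov} yields $|\phi_n(x)-n\rho|\le 1$.

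Putting these together, $|m_n(x)-n\rho|\le |F^n(x)-x-n\rho|+|f^n(x)-x|\le 1+1=2$, which is comfortably inside the stated bound $4$. I expect no genuine obstacle: the argument is a routine adaptation of the Poincar\'e rotation estimate, and in fact the irrationality of $\rho$ plays no role in this particular bound. The loose constant $4$ in the statement presumably reflects a preference for a safe round number over the tight value $2$ that the argument produces.
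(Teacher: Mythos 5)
Your argument is correct and in fact yields a sharper constant than the one stated, but it takes a genuinely different route from the paper. The paper's proof leans on the irrationality of $\rho$: it invokes the topological conjugacy $\pi\circ f_{a,b,c}\circ\pi^{-1}=R_{\rho}$ to the rigid rotation, lifts it to $G\circ F^{n}\circ G^{-1}(x)=x+n\rho$, and then splits $|m_n(x)-n\rho|$ into four intermediate differences, each bounded by $1$ using the monotonicity and $1$-periodicity of $F-\mathrm{id}$, $G-\mathrm{id}$, $G^{-1}-\mathrm{id}$ --- whence the constant $4$. You instead prove the classical Poincar\'e inequality $|F^{n}(x)-x-n\rho|\le 1$ directly, from the cocycle identity $\phi_{kn}(x)=\sum_{j=0}^{k-1}\phi_n(F^{jn}x)$ together with the fact that $\phi_n=F^{n}-\mathrm{id}$ is $1$-periodic with oscillation at most $1$ (and hence, by periodicity, oscillation at most $1$ on all of $\mathbb{R}$, which is what the cocycle step actually needs since $F^{jn}x$ need not lie in $[0,1]$); adding the trivial bound $|x-f^{n}(x)|\le 1$ supplied by Lemma \ref{rotation-frequence} then gives $|m_n(x)-n\rho|\le 2$. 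This buys three things: it avoids the conjugacy to $R_{\rho}$ entirely (and thus any Denjoy-type rigidity input), it improves the constant from $4$ to $2$, and it shows, as you observe, that the irrationality of $\rho$ is irrelevant to this estimate --- the only external ingredient is the convergence $(F^{n}(x)-x)/n\to\rho$, which Lemma \ref{Rotation-Lyapunov} establishes independently of Lemma \ref{Rational-Rotation}, so there is no circularity. Since the bound $4$ is only ever used qualitatively later in the paper (to get a uniform $r$ in Proposition \ref{case2}), either constant serves equally well.
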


\proof Since $\rho$ is irrational, $f_{a,b,c}$ is topologically
conjugate to the rigid irrational rotation $R_{\rho}$, i.e.
$R_{\rho}(x)=\rho +x$ (cf. \cite{MS}, p. 38-39). In other words,
 there exists a continuous
strictly increasing function $\pi$ on $[0,\ 1]$ onto $[0,\ 1]$
such that
\begin{equation}\label{conjugate}
 \pi \circ f_{a,b,c} \circ \pi^{-1}=R_{\rho}.
\end{equation}
Let $F$, $G$ and $G^{-1}$ be the degree one lifting map of
$f_{a,b,c}$, $\pi$ and $\pi^{-1}$ respectively.  The lifted form
of (\ref{conjugate}) is
$$
G \circ F \circ G^{-1}(x)=x+ \rho.
$$
By induction, we have
\begin{equation}\label{R-conjugate}
G \circ F^n \circ G^{-1}(x)=x+ n \rho.
\end{equation}
Write
\begin{equation}\label{estimation}
\begin{array}{lcl}
|m_n(x)-n \rho| & \le & |m_n(x)-F^n(x)|+|F^n(x)-G \circ F^n(x)| \\
&  +&|G \circ F^n(x)-G \circ F^n \circ G^{-1}(x)|\\
&  +&|G \circ F^n \circ G^{-1}(x)-n \rho|.
\end{array}
\end{equation}
Now we estimate the four terms on the right hand side. Notice
first that $F$, $G$ and $G^{-1}$ are increasing functions on
$\mathbb{R}$ and that $F(x)-x$, $G(x)-x$ and $G^{-1}(x)-x$ are
$1$-periodic functions on $\mathbb{R}$ taking with values in
$[0,1]$. So when $|x-y| \le 1$ we have
\begin{equation}\label{FF-GG}
|F(x)-F(y)| \leq 1, \ \ \ |G(x)-G(y)| \leq 1, \ \ \
|G^{-1}(x)-G^{-1}(y)| \leq 1.
\end{equation}
According to Lemma \ref{rotation-frequence}, we have the following
estimate for the first term:
$$|m_n(x)-F^n(x)| \le 1.
$$
The fact $0\le G(x) -x\le 1$ implies immediately an estimate for
the second term:
$$
|F^n(x)-G \circ F^n(x)| \le 1.
$$
Repeating the first inequality in (\ref{FF-GG}) we get
$|F^n(x)-F^n(y)| \leq 1$ when $ |x-y|\le 1$. This and the fact
$|G^{-1}(x)-x| \le 1$ imply an estimate  for the third term:
$$|G
\circ F^n(x)-G \circ F^n \circ G^{-1}(x)| \le 1.
$$
A direct consequence of (\ref{R-conjugate}) is the following
estimate for the fourth term:
 $$|G \circ F^n
\circ G^{-1}(x)-n \rho| \le 1.$$  The estimation
(\ref{ReturnRotation}) is thus proved.
 $\hfill \Box$

\medskip

\subsection{Comparison of rotation numbers in different maps }

Let $f_{a,b,c}$ be a piecewise linear Lorenz map. If $a>1$ and
$b>1$, it is well known that $f_{a,b,c}$ is expanding and then
admits a unique acim. If $a<1$ and $b<1$,  $f_{a,b,c}$ is
contracting and then admits no acim. So we may assume that $a>1
\ge b$ or $b> 1 \ge a$. In these cases, we will compare
 $f_{a,b,c}$ with homeomorphic piecewise linear Lorenz maps $f_{a_0,b,c}$ and $f_{a,b_0,c}$,
 where $a_0=\frac{1-b(1-c)}{c}$ and $b_0=\frac{1-ac}{1-c}$. See Figure 2 for the
pictures of $f_{a_0,b,c}$ and $f_{a,b_0,c}$. More precisely, in the
case $a
> 1 \ge b$,  we compare $f_{a,b,c}$ with
\begin{equation} f_{a_0,b,c}(x)=\left \{ \begin{array}{ll}
a_0x+1-ac & x \in [0,\  c) \\
b(x-c) & x \in (c,\  1]
\end{array}
\right.\label{homeo1}
\end{equation}
if $f_{a,b,c}(0)<f_{a,b,c}(1)$, and compare $f_{a,b,c}$ with
\begin{equation}
\label{homeo2} f_{a,b_0,c}(x)=\left \{ \begin{array}{ll}
ax+1-ac & x \in [0,\  c) \\
b_0(x-c) & x \in (c,\  1]
\end{array}
\right.
\end{equation}
if $f_{a,b,c}(0)>f_{a,b,c}(1)$.

In the case $b>1 \ge a$, we compare $f_{a,b,c}$ with $f_{a,b_0,c}$
when $f_{a,b,c}(0)>f_{a,b,c}(1)$, and compare $f_{a,b,c}$ with
$f_{a_0,b,c}$ when $f_{a,b,c}(0)<f_{a,b,c}(1)$.

\begin{figure}
\centering
\includegraphics[width=0.90\textwidth]{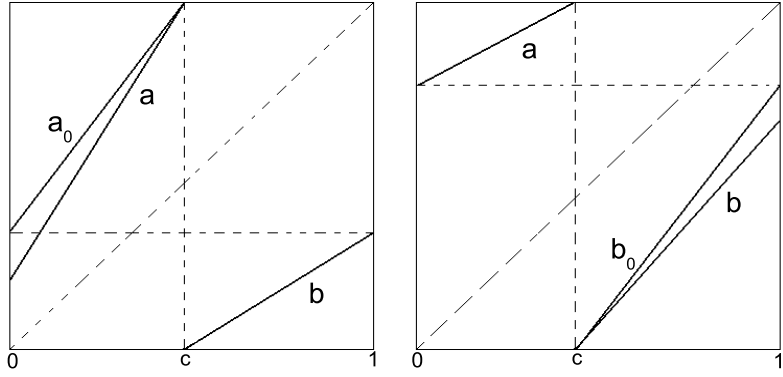}
\caption{The comparisons between piecewise linear Lorenz maps.}
\end{figure}

\begin{lemma} \label{compare}
Let $f_{a_0,b,c}$ and $f_{a,b_0,c}$ be defined as above. We have the
following conclusions:
\begin{enumerate}
\item If $f_{a,b,c}(0)<f_{a,b,c}(1)$, then for all $x \in [0, 1]$,
\[
\left \{ \begin{array}{ll} \rho(f_{a,b,c}, x) \le \rho(f_{a_0,b,c}) \ \ \ \ & \mbox{\rm if} \ \ \ \  a>1 \ge b \\
 \rho(f_{a,b,c}, x) \ge \rho(f_{a,b_0,c})\ \ \ \ & \mbox{\rm if} \ \ \ \
 a \le 1<b.
\end{array}
\right.
\]

\item If  $f_{a,b,c}(0)>f_{a,b,c}(1)$, then for all $x \in [0, 1]$,
\[
\left \{ \begin{array}{ll} \rho(f_{a,b,c}, x) \ge \rho(f_{a_0,b,c})\ \ \ \ &  \mbox{\rm if} \ \ \ \ a\ge1>b\\
 \rho(f_{a,b,c}, x) \le \rho(f_{a,b_0,c})\ \ \ \ &  \mbox{\rm if} \ \ \ \  a<1\le b.
\end{array}
\right.
\]

\end{enumerate}
\end{lemma}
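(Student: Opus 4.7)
My plan is to reduce everything to the standard monotonicity principle for degree-one lifts: if $F_1 \le F_2$ pointwise on $\R$ with both non-decreasing, then $F_1^n \le F_2^n$ for every $n$, and hence $\rho(f_1, x) \le \rho(f_2, x)$ for every $x$. By construction $a_0 c + b(1-c) = 1$ and $ac + b_0(1-c) = 1$, so $f_{a_0,b,c}$ and $f_{a,b_0,c}$ are continuous homeomorphisms of $[0,1]$; Lemma \ref{Rotation-Lyapunov} then guarantees that their rotation numbers are constants independent of $x$, which is exactly the form of inequality the lemma asserts.

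The pointwise comparison reduces to a direct computation. On $[0,c)$ one has
\[
f_{a,b,c}(x) - f_{a_0,b,c}(x) = (a - a_0)(x-c),
\]
while the two maps coincide on $(c,1]$; symmetrically, $f_{a,b,c}$ and $f_{a,b_0,c}$ coincide on $[0,c)$ and differ by $(b - b_0)(x-c)$ on $(c,1]$. The sign of $a - a_0$ matches the sign of $ac + b(1-c) - 1 = f_{a,b,c}(1) - f_{a,b,c}(0)$, and likewise for $b - b_0$. Combined with the sign of $x - c$ on the relevant branch, this yields, in each of the four cases of the lemma, a definite pointwise ordering of $f_{a,b,c}$ against its comparison homeomorphism on all of $[0,1]$.

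I would then lift both maps using the normalization $F(0) = f(0)$ from Section 2.1. Since the comparison maps agree on one of the two branches, their lifts agree on the corresponding subinterval; the pointwise inequality on the other branch therefore transports verbatim to an inequality of lifts on $[0,1]$, and then to all of $\R$ via $F(x+1) = F(x) + 1$. Applying the monotonicity argument above and invoking Lemma \ref{Rotation-Lyapunov} for the comparison map produces the desired inequality of rotation numbers. The main obstacle, such as it is, is purely bookkeeping: matching each of the four case hypotheses ($a > 1 \ge b$ versus $a \le 1 < b$, together with $f_{a,b,c}(0) < f_{a,b,c}(1)$ versus $f_{a,b,c}(0) > f_{a,b,c}(1)$) to the correct direction of the conclusion. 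I would handle this by organizing the cases into a single table indexed by the signs of $a - a_0$ (or $b - b_0$) and of $x - c$ on the active branch, which makes the assertions of (1) and (2) a uniform consequence of the monotonicity principle.
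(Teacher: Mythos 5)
Your proposal is correct and takes essentially the same route as the paper's proof: establish a pointwise ordering between $f_{a,b,c}$ and the comparison homeomorphism ($f_{a_0,b,c}$ or $f_{a,b_0,c}$), transfer it to the degree-one lifts, iterate using monotonicity to get $F^n \le F_0^n$ (or the reverse), and conclude the inequality of rotation numbers, using that the comparison map is a circle homeomorphism so its rotation number is a single constant. The only difference is that you make the sign bookkeeping explicit via the identity $f_{a,b,c}(x)-f_{a_0,b,c}(x)=(a-a_0)(x-c)$ on the active branch, which the paper leaves implicit.
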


\begin{proof} Let $F$ be the degree one lifting  of $f_{a,b,c}$ and $F_0$ be
the degree one lifting of $f_{a_0,b,c}$. For the case
$f_{a,b,c}(0)<f_{a,b,c}(1)$ and $a>1\ge b$, since $f_{a,b,c}(x) \leq
f_{a_0,b,c}(x)$ for all $ x \in [0,\ 1]$, we have $F(y) \leq F_0(y)$
for all $y \in \mathbb{R}$. It follows that $F^n(y) \leq F^n_0(y)$
for  $y \in \mathbb{R}$ and for every positive integer $n$.
Therefore $\rho(f_{a,b,c},x) \le \rho(f_{a,b_0,c})$. The other
inequalities can be similarly proved.
\end{proof}

\subsection{Frobenius-Perron operator and invariant density}

The Frobenius-Perron operator associated with $f_{a,b,c}$ is defined
as follows: for any $h \in L^1(I)$,
$$
P_{a,b,c} h(x)=\frac{{\bf 1}_{[1-ac,\
1]}(x)}{a}h\left(\frac{x-(1-ac)}{a}\right) +\frac{{\bf 1}_{[0,\
b(1-c)]}(x)}{b}h\left(\frac{x+bc}{b}\right).
$$
The invariant density $h_*$ of the Frobenius-Perron operator corresponds to an acim $\mu$ of $f_{a,b,c}$, $\mu(A)=\int_A h_* d m$, where $A \in \mathcal{B}$ is a Borel set and $m$ is the Lebesgue measure on $I$.

\begin{lemma}\label{theorem a} Let $P$ be the Frobenius-Perron operator associated with $f_{a,b,c}$. Then we have the following statements:

\begin{enumerate}

\item If there exists positive integer $n \ge 1$ such that $f_{a,b,c}^n(x)=x$ for all $x \in I$, then for each density $g$ on $[0,\ 1]$, $A_n(g)$ is an invariant density of $P_{a,b,c}$.

\item If there exists a constant $r>0$ such that for all positive integer $n$,
$$ rm(A)\le m(f^{-n}_{a,b,c}(A)) \le m(A)/r,\ \ \ \ \forall\  A \in
\mathcal{B},$$ then $f_{a,b,c}$ admits a unique acim whose density
is bounded by $r$ and $1/r$.

\item If there exists a positive integer $n$ such that
 $f_{a,b,c}^n$  is strongly expanding, i.e., $(f_{a,b,c}^n)'(x)>\lambda>1$ for all $x \in I$ except finite points, then $f_{a,b,c}$ admits an acim whose density is of bounded variation.

\item If there exists a positive integer $n$ such that $(f_{a,b,c}^n)'(x)<\lambda<1$ for all $x \in I$ except finite points, then $f_{a,b,c}$ admits no acim.
\end{enumerate}

\end{lemma}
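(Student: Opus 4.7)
The four clauses can be dispatched by separate arguments targeting the Frobenius--Perron operator $P := P_{a,b,c}$. For (1), the hypothesis $f^n = \mathrm{id}$ on $I$ forces $P^n$ to act as the identity on $L^1$, because $\int_A P^n g\, dm = \int_{f^{-n}(A)} g\, dm = \int_A g\, dm$ for every Borel $A$ and every $g \in L^1$. A one-line telescoping then gives $n(PA_n(g) - A_n(g)) = P^n g - g = 0$, so $A_n(g)$ is a $P$-invariant density.

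For (2), testing the hypothesis on indicator functions yields $r \le P^n \mathbf{1} \le 1/r$ almost everywhere, a bound preserved under Cesàro averaging, so the sequence $A_n(\mathbf{1})$ is weak-$*$ precompact in $L^\infty$. The identity $n(PA_n(\mathbf{1}) - A_n(\mathbf{1})) = P^n \mathbf{1} - \mathbf{1}$ forces every weak-$*$ cluster point $h_*$ to satisfy $Ph_* = h_*$, and the same two-sided bound is inherited pointwise. For uniqueness I would invoke the ergodic decomposition: any two distinct ergodic acims are mutually singular, but each must have full support on $I$ (its density being bounded below by $r$), which forces them to coincide, and a general acim reduces to a convex combination of ergodic acims.

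For (3), I apply the Lasota--Yorke theorem to the iterate $f^n$: being piecewise linear with derivative exceeding $\lambda > 1$ off finitely many points, it admits an acim with density $h_\nu$ of bounded variation. An $f$-invariant density is then produced by the symmetrization $h := \frac{1}{n}\sum_{i=0}^{n-1} P^i h_\nu$: $P$-invariance comes from $P^n h_\nu = h_\nu$ via the same telescoping, and bounded variation is preserved because each $P^i$ acts on $h_\nu$ as a finite sum of affine substitutions restricted to subintervals, an operation that preserves $BV$. For (4), since the slope on each branch of $f^n$ is at most $\lambda < 1$ and the branches partition $I$, summing over them gives the global contraction $m(f^n(A)) \le \lambda\, m(A)$ for every Borel $A$. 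Iterating, $m(f^{kn}(I)) \le \lambda^k \to 0$; on the other hand, $I \setminus f^{kn}(I)$ has empty $f^{-kn}$-preimage in $I$, so any $f$-invariant measure must be concentrated on $\bigcap_k f^{kn}(I)$, a Lebesgue null set. This contradicts absolute continuity and rules out an acim.

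The step I expect to be most delicate is the uniqueness assertion in (2): the weak-$*$ construction supplies existence only, and excluding a second invariant density genuinely requires an ergodic-theoretic input, such as the ergodic decomposition combined with the lower bound $h_* \ge r$.
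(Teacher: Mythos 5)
Your treatment of (1), (3) and (4) is correct and, if anything, more detailed than the paper's, which simply declares (1) and (4) obvious and cites the Lasota--Yorke theorem for (3); your telescoping identity for (1), the averaging $\frac1n\sum_{i=0}^{n-1}P^ih_\nu$ to pass from $f^n$-invariance to $f$-invariance in (3), and the contraction estimate $m(f^{kn}(I))\le\lambda^k$ in (4) are all sound. For the existence half of (2) you follow essentially the paper's route: from $r\le P^n\mathbf 1\le 1/r$ one extracts an invariant density from the Ces\`aro averages; the paper uses weak precompactness of $\{A_n(\mathbf 1)\}$ in $L^1$ together with the Kakutani--Yosida mean ergodic theorem, whereas you use weak-$*$ compactness in $L^\infty$, which additionally requires $P$ to be weak-$*$ continuous there (true, since the composition operator $\phi\mapsto\phi\circ f$ is bounded on $L^1$ for a piecewise linear map with nonzero slopes). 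Either way the limit density inherits the bounds $r\le h_*\le 1/r$.

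The genuine gap is exactly where you suspected: uniqueness in (2). Your parenthetical ``its density being bounded below by $r$'' is applied to an \emph{arbitrary} ergodic acim, but the hypothesis only yields $r\le P^n\mathbf 1\le 1/r$, which bounds the particular density obtained as a Ces\`aro limit of $\mathbf 1$, not every invariant density. Indeed the hypothesis of (2) does not by itself exclude invariant densities vanishing on a backward-invariant set of positive measure: when $f_{a,b,c}$ is a circle homeomorphism with $f^q=\mathrm{id}$ (the rational case of Theorem A), the bounds $rm(A)\le m(f^{-n}(A))\le m(A)/r$ hold for all $n$ with some $r>0$, yet there are many mutually non-equivalent invariant densities. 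So mutual singularity of distinct ergodic acims cannot be contradicted this way. To be fair, the paper's own proof of (2) also establishes only existence and the two-sided bound; uniqueness is actually needed only in the case $f(0)=f(1)$ with irrational $\log a/\log b$, where it follows from the fact that $f$ is conjugate to an irrational rotation and hence uniquely ergodic. A self-contained repair would either import that dynamical input or weaken the conclusion of (2) to the existence of an acim with density in $[r,1/r]$.
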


\begin{proof}
The assertions (1) and (4) are obvious. The assertion (3) is a
direct consequence of Lasota and Yorke's Theorem (\cite{LM, LY1}).
Now we prove (2). The assumption in this case means that $r \le
P_{a,b,c}^n {\bf 1} \le 1/r$. So $\{A_n({\bf 1})\}_{n \ge 0}$ is
weakly precompact in $L^1(I)$. From the weakly compactness of
$\{A_n({\bf 1})\}_{n \ge 0}$ we can extract a subsequence
$A_{n_k}({\bf 1})$ that converges weakly to $g$ and $P_{a,b,c} g=g$.
By the abstract ergodic Theorem of Kakutani and Yosida (\cite{LM}),
$A_n({\bf 1})$ converges strongly to $g$. This implies that $g$ is
an invariant density of $P_{a,b,c}$ and $r \le g(x) \le 1/r$.
\end{proof}

In the third case, $f_{a,b,c}$ is said to be eventually piecewise
expanding  \cite{Go2}.

\subsection{Renormalization of Lorenz map}

Let $f_{a,b,c}$ be a piecewise linear Lorenz map satisfying
$ac+b(1-c)>1$. The equivalence between the acim of $f_{a,b,c}$ and
the Lebesgue measure is a question of transitivity of $f_{a,b,c}$
(see Lemma \ref{transitive}). One can describe the transitivity of
$f_{a,b,c}$ by using the device of renormalization.

A Lorenz map $f: I \to I$ is said to be {\it
renormalizable} if there is a proper subinterval $[u,\ v]$ which contains the critical point $c$, and
integers $\ell, r>1$ such that the map $g: [u,\ v] \to [u,\ v]$
defined by
\begin{equation} \label{renormalization}
g(x)=\left \{
\begin{array}{ll}
f^{\ell}(x) & x \in [u,\  c), \\
f^{r}(x) & x \in (c,\  v],
\end{array}
\right.
\end{equation}
is itself a Lorenz map on $[u,\ v]$.

A Lorenz map $f$ is said to be expanding if the preimages of the
critical point is dense in $I$.  The renormalization theory of
expanding Lorenz map is well understood (see \cite{Dy,GS}). The
transitivity of an expanding Lorenz map can be characterized by its
renormalization. For example, $f$ is transitive if it is not
renormalizable (\cite{Dy}).

Let $f$ be an expanding Lorenz map. The renormalizability of $f$ is
closely related to the periodic orbit with minimal period. Denote
$\kappa$ the smallest period of the periodic points of $f$. If
$\kappa=1$ (i.e., $f$ admits a fixed point), we must have $f(0)=0$
or $f(1)=1$ because  $f$ is expanding. It follows that $f$ is
transitive (\cite{Dy}). If $\kappa=\infty$, i.e. $f$ admits no
periodic point, then $f$ is topologically conjugates to an
irrational rotation on the circle because $f$ is expanding
(\cite{GS}). For the case $1<\kappa<\infty$, we have the following
Lemma.

\begin{lemma} (\cite{Dy}) \label{minimal period}
Let $f: [0,\ 1] \to [0,\ 1]$ be an expanding Lorenz map with
$1<\kappa<\infty$.
\begin{enumerate}
\item The minimal period of $f$ is equal to $\kappa=m+2$, where
 $$m=\min\{i \ge 0: f^{-i}(c) \in [f(0),\ f(1)]\}.$$

\item $f$ admits a unique
$\kappa$-periodic orbit  $O$.

\item  Let $P_L$ and $P_R$ be adjacent points in $O$ such that $c \in [P_L,\ P_R]$.
Then $f^{\kappa}$ is continuous on $[P_L,\ c)$ and on $(c,\ P_R]$.
Moreover,  we have
\begin{equation}
\label{equality}\bigcup_{i=0}^{\kappa-1}f^i([P_L,\
P_R])=I.\end{equation}
\end{enumerate}
\end{lemma}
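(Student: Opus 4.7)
I will prove the three parts in sequence, using the density of preimages of $c$ (the expanding hypothesis) together with a combinatorial analysis of the monotone branches of iterates of $f$.

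For part (1), I will first produce a period-$(m+2)$ orbit and then rule out smaller periods. By the definition of $m$ there exists $q \in [f(0),f(1)]$ with $f^m(q)=c$ such that the partial orbit $q, f(q), \ldots, f^{m-1}(q)$ avoids $c$. Because $[f(0),f(1)]$ lies in the image of both branches of $f$, the point $q$ has two preimages $q_L\in[0,c)$ and $q_R\in(c,1]$; the appropriate one-sided limits of $f^{m+1}$ at $q_L$ and $q_R$ tend to $c^-$ and $c^+$ on the monotone branches containing them, so the two branches of $f^{m+2}$ that abut $c$ sweep values between levels close to $0$ and close to $1$. The intermediate value theorem applied to these branches produces fixed points of $f^{m+2}$ immediately on either side of $c$; these will serve as $P_L$ and $P_R$. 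That the period is exactly $m+2$ follows from the minimality of $m$: were the period some proper divisor $k$, the orbit element adjacent to $c$ would yield a preimage of $c$ at level $k-2$ lying inside $[f(0),f(1)]$, contradicting $k-2<m$. The same argument in reverse shows $\kappa\ge m+2$ for any periodic orbit, giving $\kappa=m+2$.

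For part (2), uniqueness of the $\kappa$-periodic orbit follows from the strict monotonicity of each branch of $f^\kappa$ (an increasing branch meets the diagonal at most once) together with the observation that, given $\kappa=m+2$, the only branches of $f^\kappa$ capable of supporting fixed points are precisely the two abutting $c$; since $P_L$ and $P_R$ are adjacent in $O$, they lie on a single orbit.

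For part (3), continuity of $f^\kappa$ on $[P_L,c)$ and on $(c,P_R]$ amounts to showing that no $f^i$ with $0\le i<\kappa$ sends a point of these half-intervals to $c$; such a preimage would, by the construction of part (1), yield a periodic point of period at most $i+2<\kappa$, contradicting the minimality of $\kappa$. The covering identity $\bigcup_{i=0}^{\kappa-1} f^i([P_L,P_R])=I$ follows from a combinatorial argument: the $\kappa$ points of $O$ partition $[0,1]$ into finitely many intervals whose images under $f$ are shuffled in a manner controlled by the cyclic structure of $O$, and the interval containing $c$ is $[P_L,P_R]$; any nonempty open set missed by every forward iterate of $[P_L,P_R]$ would have to avoid all preimages of $c$, contradicting the expanding hypothesis. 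The main obstacle throughout is the bookkeeping in part (1): tracking on which side of $c$ each one-sided iterate lands and verifying that the relevant monotone branch of $f^{m+2}$ actually crosses the diagonal. Once this is secured, parts (2) and (3) reduce to monotonicity arguments together with the density of preimages of $c$.
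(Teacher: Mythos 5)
First, a point of comparison: the paper does not prove this lemma at all --- it is imported from the reference [Dy] (\emph{Renormalization and $\alpha$-limit set for expanding Lorenz maps}), so there is no in-paper argument to measure yours against. Judged on its own, your outline has a plausible architecture (an intermediate value argument on monotone branches of $f^{m+2}$ near $c$ for existence, monotonicity for uniqueness, minimality of $\kappa$ for continuity of $f^{\kappa}$ on $[P_L,\ c)$ and $(c,\ P_R]$), but the steps carrying the real content of part (1) are asserted rather than proved.

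The most serious gap is the lower bound $\kappa\ge m+2$. You claim that a periodic orbit of period $k<m+2$ ``would yield a preimage of $c$ at level $k-2$ lying inside $[f(0),\ f(1)]$,'' but a periodic point is not a preimage of $c$, and you give no mechanism extracting such a preimage from the orbit; this implication is essentially the entire difficulty of part (1), and in [Dy] it comes out of the first-return/renormalization structure around $c$, not from a one-line observation. The existence half is also incomplete: $q_L$ and $q_R$ are discontinuity points of $f^{m+2}$ at which the one-sided limits are $1$ and $0$, but they are not in general the endpoints of the branches of $f^{m+2}$ abutting $c$; to run the intermediate value theorem on those branches you must evaluate $f^{m+2}$ at their actual endpoints (namely $f^{m+1}(1)$ as $x\uparrow c$, and the image of the nearest lower-order preimage of $c$ at the other end) and show the difference with the identity changes sign, which uses the minimality of $m$ and the hypothesis $q\in[f(0),\ f(1)]$ in a way you have not spelled out. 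Finally, for the covering identity (\ref{equality}), density of the preimages of $c$ shows that every nonempty open set meets $\bigcup_{n\ge 0}f^{-n}([P_L,\ P_R])$, i.e.\ \emph{backward} images, whereas (\ref{equality}) concerns the finitely many \emph{forward} images $f^i([P_L,\ P_R])$, $0\le i<\kappa$; the identity is really a combinatorial statement about how the gaps of the minimal-period orbit are permuted and needs its own argument.
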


For general expanding Lorenz map $f$, it is difficult to check wether
$f$ is renormalizable or not. However, for  piecewise linear Lorenz map $f_{a,b,c}$ satisfying $ac+b(1-c)>1$, one can check the
 renormalizability easily. According to the proof of Theorem A, $f_{a,b,c}$ is expanding. Denote $O$ as the $\kappa$-periodic orbit,
 and $$D:=\overline{\bigcup_{n \ge 0}f_{a,b,c}^{-n}(O)}.
 $$

\begin{lemma}(\cite{CD}) \label{dic} If $D \neq O$, then $f_{a,b,c}$ is not renormalizable.

\end{lemma}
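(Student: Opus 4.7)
\emph{Proof plan.} I shall argue the contrapositive: assuming $f := f_{a,b,c}$ is renormalizable on a proper subinterval $[u,v] \ni c$ with return times $\ell, r > 1$, I aim to show $D = O$. Since the renormalized map $g$ in \eqref{renormalization} is a Lorenz map satisfying $g([u,v]) \subseteq [u,v]$, the one-sided behavior at the endpoints forces $g(u) = u$ and $g(v) = v$, so that $u$ and $v$ are $f$-periodic with periods dividing $\ell$ and $r$ respectively. Combining this with the minimality of $\kappa$ and the uniqueness of the $\kappa$-periodic orbit $O$ given by Lemma~\ref{minimal period}, I identify $u, v \in O$ and, via the adjacency property of $P_L, P_R$, obtain $[u,v] = [P_L, P_R]$, with $\kappa$ dividing both $\ell$ and $r$.

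Next, I would exploit Lemma~\ref{minimal period}(3), which gives $I = \bigcup_{i=0}^{\kappa-1} f^i([P_L, P_R])$. Setting $J_i := f^i([P_L, P_R])$, the renormalizability hypothesis implies that these intervals have pairwise disjoint interiors, that $c$ lies only in $J_0$, and that $f$ acts as a monotone affine bijection $J_i \to J_{i+1 \bmod \kappa}$ for each $i \neq 0$. I would then prove by induction on $n$ that $f^{-n}(O) \subseteq O$. For $i \neq 0$, the affine bijection $f\colon J_i \to J_{i+1}$ sends endpoints to endpoints, and these endpoints lie in $O$ (as forward images of $P_L, P_R$). For the central interval $J_0 = [P_L, P_R]$, the renormalization condition supplies $g(c+) = P_L$ and $g(c-) = P_R$ as one-sided limits, and using the Lorenz structure of $g$ one shows that preimages of $O$-points in $J_1$ under the two branches of $f|_{J_0}$ must land in $\{P_L, P_R\} \subseteq O$. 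Iterating yields $D = \overline{\bigcup_n f^{-n}(O)} = O$.

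The main obstacle is this last step: excluding stray preimages of $O$-points arising from the two branches of $f$ on the central interval $J_0$. Each $p \in O$ may in principle have two $f$-preimages (its cyclic predecessor in $O$, and potentially another on the opposite branch), and the delicate task is to show that the second preimage, whenever it exists, coincides with another point of $O$. This requires combining the renormalization condition, which controls where the forward orbit of $c$ lands, with the piecewise linearity of $f_{a,b,c}$ and a careful accounting of the ranges $[1-ac, 1]$ and $(0, b(1-c)]$ of the two branches relative to the position of $O$. Once this branch-by-branch analysis is settled, the induction and the identity $D = O$ follow in a routine manner.
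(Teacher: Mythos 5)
The paper offers no proof of this lemma at all --- it is quoted from \cite{CD} --- so the only basis for comparison is the statement itself and the surrounding definitions, and against those your plan has a genuine gap that appears already at the first step. The definition \eqref{renormalization} requires the renormalized map $g$ to be a Lorenz map \emph{on} $[u,v]$, which means $\lim_{x\uparrow c}g(x)=v$ and $\lim_{x\downarrow c}g(x)=u$; it does \emph{not} require $g(u)=u$ or $g(v)=v$, and no ``one-sided behavior at the endpoints'' forces the endpoints to be fixed (only $g(u)\ge u$ and $g(v)\le v$ are forced). Indeed, the renormalization that actually occurs for these maps (see the proof of Theorem B) lives on $[u,v]=[A,B]=[f^{\kappa}(c+),\,f^{\kappa}(c-)]$ with $\ell=r=\kappa$, and $A,B$ are in general not periodic points: $g(A)=f^{\kappa}(A)\neq A$ unless $A\in O$. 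So the identification $[u,v]=[P_L,P_R]$ collapses, and with it the whole architecture that follows (the intervals $J_i$ with endpoints in $O$, the claim that $g(c+)=P_L$ and $g(c-)=P_R$, etc.).

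Independently, you have deferred exactly the step that carries the entire content of the lemma: showing that the second $f$-preimage of a point $p\in O$ (the one through the branch other than that of $p$'s cyclic predecessor, which exists whenever $p\in[f(0),f(1)]$) again lies in $O$. This is not routine, and under the paper's literal definitions it can in fact fail for a renormalizable map. Take $c=1/2$, $b=7/5$, $a=15/11$: then $f$ has no fixed point, $\kappa=2$, $O=\{P_L,P_R\}=\{7/25,\,7/10\}$, and $[A,B]=[7/22,\,7/10]\subseteq[P_L,P_R]$; a direct check shows $f^{2}$ restricted to $[A,B]$ is a Lorenz map on $[A,B]$, so $f$ is renormalizable in the sense of \eqref{renormalization}, yet $f(1)=7/10=P_R$ while $1\notin O$, so $f^{-1}(O)\neq O$ and $D\neq O$. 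Thus the contrapositive you are aiming at cannot be established by the induction you sketch; whatever the intended formulation in \cite{CD} is (it presumably excludes or treats separately such boundary parameters, where an endpoint of $I$ falls onto $O$), the proof must rest on a much more careful control of where preimages of $O$ can lie. I would recommend consulting \cite{CD} for the precise statement and argument rather than reconstructing it along these lines.
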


\setcounter{equation}{0}

\section{ Existence of absolutely continuous invariant measure }

Now we  prove Theorem A by distinguishing four cases:
$f_{a,b,c}(0)=f_{a,b,c}(1)$ and $ \log a / \log b $ is rational,
$f_{a,b,c}(0)=f_{a,b,c}(1)$ and $ \log a / \log b $ is irrational,
$f_{a,b,c}(0)<f_{a,b,c}(1)$ and $f_{a,b,c}(0)>f_{a,b,c}(1)$.  In the
first case, we will show that some power of $f_{a,b,c}$ is identity,
i.e., there exists $n>0$ such that $f^{n}_{a,b,c}(x)=x$ for all
$x\in I$. In the second case we will prove  $$ rm(A)\le
m(f^{-n}_{a,b,c}(A)) \le m(A)/r,\ \ \ \ \forall\ A \in
\mathcal{B},$$ for some constant $r$ and $n \ge 0$. In the third
case  we will show that some power of $f_{a,b,c}$ is expanding. In
the forth case, we will compare $f_{a,b,c}$ with a suitable
homeomorphic piecewise linear Lorenz map and prove that some power
$f^n_{a,b,c}$ is contracting.

\subsection{Proof of Theorem A when $f_{a,b,c}(0)=f_{a,b,c}(1)$ and $ \log a / \log b $ is rational} \
\ \ \ \ \ \ \ \ \

According to Lemma \ref{theorem a}, it suffice to prove the
following proposition.

\begin{proposition}\label{case1} Suppose $f_{a,b,c}(0)=f_{a,b,c}(1)$ and $ \log a / \log b $ is rational. Then there exists positive integer $n$ such that $f_{a,b,c}^n(x)=x$ for all $x \in I$.
\end{proposition}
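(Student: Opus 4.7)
Write $\rho$ for the rotation number of $f := f_{a,b,c}$. By Lemma \ref{Rotation-Lyapunov}, $\rho$ is rational; write $\rho = p/q$ in lowest terms, so $a^{q-p}b^p = 1$. Since $\log a / \log b = -p/(q-p)$, the integer equation $a^{q-k}b^k = 1$ has the unique solution $k = p$. The hypothesis $f(0) = f(1)$ makes $f$ a continuous orientation-preserving homeomorphism of the circle $S^1 := [0,1]/\{0 \sim 1\}$ with rotation number $p/q$, and by Poincar\'e's theorem $f$ admits a periodic orbit of minimal period $q$. I aim to show $f^q = \operatorname{id}$.

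The composition $f^q$ is piecewise linear with breakpoints contained in $C_q := \bigcup_{i=0}^{q-1} f^{-i}(c)$, and on any linear piece its derivative equals $a^{q-m_q(x)}b^{m_q(x)}$, where $m_q(x)$ counts visits of the orbit to $R := (c, 1]$ during the first $q$ iterates. If one can show $m_q(x) = p$ on every piece, then $(f^q)' \equiv 1$, so $f^q$ is a continuous piecewise translation on $S^1$; together with the existence of a fixed point, this forces $f^q = \operatorname{id}$ by propagating the identity $f^q(y) = y$ from any fixed point across successive slope-$1$ pieces using continuity at the breakpoints.

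The crux is to show that $c$ is a $q$-periodic point. For any $q$-periodic orbit $\{y_0, \dots, y_{q-1}\}$ avoiding $c$, the point $y_0$ is not a breakpoint of $f^q$; combining $f^q(y_0) = y_0$ with Lemma \ref{rotation-frequence} gives $F^q(y_0) - y_0 = m_q(y_0)$, and on periodic orbits this monodromy integer equals the winding number $p$, so $m_q(y_0) = p$ and $(f^q)'(y_0) = 1$. Hence $f^q$ coincides with the identity in a whole neighbourhood of $y_0$. Suppose now, for contradiction, that no $q$-periodic orbit contains $c$: then the non-empty set $Z := \{x \in S^1 : f^q(x) = x\}$ consists entirely of $q$-periodic points whose orbits miss $c$, so $Z$ is both closed (by continuity) and open. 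Since $S^1$ is connected, $Z = S^1$, i.e., $f^q = \operatorname{id}$, contradicting $f^q(c) \neq c$. Thus $c$ is periodic of period $q$.

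With $c$ periodic, the critical orbit $\mathcal{O} := \{c, f(c), \dots, f^{q-1}(c)\}$ divides $S^1$ into $q$ open arcs, each lying entirely inside $L := [0, c)$ or inside $R$ because $c$ is shared as an endpoint. The homeomorphism $f$ cyclically permutes these arcs; since the rotation number is $p/q$ with $\gcd(p,q) = 1$, the induced permutation on cyclically ordered arc-labels is a single $q$-cycle advancing by $p$ positions per step. Therefore any orbit visits each arc exactly once over $q$ iterates, and $m_q(x) = \#\{\text{arcs contained in } R\}$ is independent of $x$. A cyclic-distance count -- noting that $c$ and $f(c)$ (the identified point $0 \sim 1$) lie $p$ arcs apart in the cyclic order matching the direction of $R$ -- gives this common value equal to $p$. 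Hence $(f^q)' \equiv 1$ off the finite set $\mathcal{O}$, and the propagation argument from the second paragraph yields $f^q = \operatorname{id}$. The main obstacle is establishing the periodicity of $c$, which crucially uses the slope identity $a^{q-p}b^p = 1$ to force $(f^q)' = 1$ at every $q$-periodic point whose orbit misses $c$.
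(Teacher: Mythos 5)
Your proof is correct in substance and shares the paper's skeleton: both arguments rest on the identity $a^{q-p}b^{p}=1$ forcing $(f^{q})'=1$ at every periodic point (via $m_{q}=p$, obtained from Lemma \ref{rotation-frequence} and the monodromy $F^{q}(y)-y=p$), and both finish by propagating $f^{q}=\operatorname{id}$ across the slope-one linear pieces starting from a fixed point of $f^{q}$. Where you genuinely diverge is the crux step that $c$ is periodic: the paper argues directly on the interval between $c$ and the periodic point immediately to its left, on which $f^{q}$ is linear, fixes the left endpoint and has slope $1$, hence is the identity there, so $c-$ is periodic; you instead run a clopen/connectedness argument on $S^{1}$. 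Your arc-permutation count giving $m_{q}\equiv p$ is likewise a cleaner substitute for the paper's terse final step. One point needs repair, though: your claim that the breakpoints of $f^{q}$ are contained in $\bigcup_{i=0}^{q-1}f^{-i}(c)$ holds for $f^{q}$ on the interval but not on the circle, where $0\sim 1$ is an extra breakpoint of $f$ (one-sided slopes $a$ and $b$). Since the clopen argument lives on $S^{1}$, you must exclude the possibility that a $q$-periodic orbit avoiding $c$ passes through $[0]$: at such a point $f^{q}$ would have one-sided slopes $1$ and $b/a$, and $Z$ would fail to be open there. The repair is one line: $f([c])=[0]$ and $f$ is a bijection of $S^{1}$ (indeed $f^{-1}([0])=\{[c]\}$ since $ac<1$ and $b(1-c)<1$ here), so any periodic orbit containing $[0]$ also contains $[c]=f^{q-1}([0])$; hence a periodic orbit avoiding $c$ automatically avoids $[0]$, and your clopen argument goes through.
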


\begin{proof} In this case, $f_{a,b,c}$ can be regarded as a homeomorphism on the unit circle. Since $ \log a / \log b $ is rational, the rotation number of $f_{a,b,c}$ is also rational (Lemma \ref{Rotation-Lyapunov}). Write
 ${\displaystyle \rho(f_{a,b,c})=
\frac{m}{n}}$ with $(m,\ n)=1$. We shall prove that
$f^n_{a,b,c}(x)=x$ for all $x \in [0,\ 1]$.

Since ${\displaystyle\rho(f_{a,b,c})=\frac{m}{n}}$, $f_{a,b,c}$
admits an $n-$periodic orbit (\cite{ALMT}). Let $p_1<p_2<\cdots<p_n$
be an $n$-periodic orbit. The orbit forms a partition of $I$:
$$[p_1,\ p_2), \cdots,  [p_{n-1},\ p_n),\ [p_n, 1] \cup [0, p_1),$$ and
$f_{a,b,c}$ maps one subinterval onto the next one in the partition.
Each subinterval in the partition contains only one point in
$C_{a,b,c}$. Since $\rho(f_{a,b,c})= \frac{m}{n}$, $c \in [p_{n-m},
p_{n-m+1})$.

If $c$ doesn't belong to the periodic orbit,
$c \in (p_{n-m}, \ p_{n-m+1})$.  Consider the
interval $[p_{n-m}, c)$, it follows that $f_{a,b,c}^n$ is
continuous and linear on $[p_{n-m}, c)$ because $f_{a,b,c}^n$ has
only one discontinuity $c$ in $[p_{n-m}, p_{n-m+1}]$. Notice that
$f_{a,b,c}^n(p_{n-m})=p_{n-m}$ and $(f_{a,b,c}^n)'(p_{n-m})=1$, we
obtain that $f_{a,b,c}^n(x)=x$ on $[p_{n-m}, c)$, which implies
that $c-$ is an $n$-periodic point. So $1$ is also an $n$-periodic point.

We denote the $n$-periodic orbit of $1$ as
$0<q_1<q_2<\cdots<q_{n-m-1}<q_{n-m}=c<q_{n-m+1}<\cdots<q_n=1$. Since
$f_{a,b,c}^n$ is linear on $[q_i, q_{i+1})$, it follows that
$f_{a,b,c}^n(x)=x$ on $[q_i, q_{i+1})$, $i=1,2,\ldots, n$. So
$f_{a,b,c}^n(x)=x$ on $I$.
\end{proof}

\subsection{Proof of Theorem A when $f_{a,b,c}(0)=f_{a,b,c}(1)$ and $ \log a / \log b $ is irrational} \
\ \ \ \ \ \ \ \ \

\smallskip
In this case,  according to Lemma \ref{theorem a}, we have only to
prove the following proposition.

\begin{proposition}\label{case2} Suppose $f_{a,b,c}(0)=f_{a,b,c}(1)$ and $ \log a / \log b $ is irrational,
there exists a constant $r>0$ such that $$rm(A)\le
m(f^{-n}_{a,b,c}(A)) \le m(A)/r,\ \ \ \ \ \forall A \in
\mathcal{B}.$$
\end{proposition}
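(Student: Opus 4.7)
The plan is to derive uniform two-sided bounds on the derivative $(f_{a,b,c}^n)'$, then turn them into the claimed measure inequality by decomposing the interval along the monotonicity branches of $f_{a,b,c}^n$.

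First, I would start from the product formula $(f_{a,b,c}^n)'(x) = a^{n-m_n(x)} b^{m_n(x)}$ in equation (\ref{RotationLiapunov}) and rewrite it as $(f_{a,b,c}^n)'(x) = (a^{1-\rho} b^{\rho})^n \cdot (b/a)^{m_n(x)-n\rho}$. By Lemma \ref{Rotation-Lyapunov}, $a^{1-\rho}b^\rho = 1$, so the first factor is $1$ and drops out. Since $\log a/\log b$ is irrational, the same lemma tells us that $\rho$ is irrational, and Lemma \ref{Rational-Rotation} then yields $|m_n(x)-n\rho|\le 4$ for every $x\in[0,1]$ and $n\ge 1$. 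Setting $r := \min\{(a/b)^4,(b/a)^4\}$, this gives $r \le (f_{a,b,c}^n)'(x) \le 1/r$ on the complement of the finite set $\bigcup_{i=0}^{n-1}f_{a,b,c}^{-i}(c)$.

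Next I would exploit that $f_{a,b,c}$ is a homeomorphism of the circle $\mathbb{R}/\mathbb{Z}$, which uses the hypothesis $f_{a,b,c}(0) = f_{a,b,c}(1)$. Let $I_1 < I_2 < \cdots < I_{k_n}$ be the maximal intervals on which $f_{a,b,c}^n$ is continuous and monotone. On each $I_j$ the restriction $f_{a,b,c}^n|_{I_j}$ is affine with constant slope $s_j\in [r,1/r]$, and because $f_{a,b,c}^n$ is itself a circle homeomorphism, the images $f_{a,b,c}^n(I_j)$ form a partition of $I$ up to a Lebesgue-null set (no point of $I$ has more than one $f_{a,b,c}^n$-preimage). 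For an arbitrary Borel set $A\subseteq I$ this immediately yields the identity $m(f_{a,b,c}^{-n}(A)) = \sum_{j=1}^{k_n} s_j^{-1}\, m(A\cap f_{a,b,c}^n(I_j))$, and summing using $r\le s_j\le 1/r$ together with $\sum_j m(A\cap f_{a,b,c}^n(I_j)) = m(A)$ gives $r\,m(A) \le m(f_{a,b,c}^{-n}(A)) \le m(A)/r$, as required.

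The only genuinely nontrivial ingredient is the uniform control $|m_n(x)-n\rho|\le 4$, which is already furnished by Lemma \ref{Rational-Rotation} through the Denjoy-type conjugacy with an irrational rotation; everything else is bookkeeping. The one point needing mild care is the tiling claim for the images $f_{a,b,c}^n(I_j)$, but this is immediate from $f_{a,b,c}$ having degree one and being a circle homeomorphism, so overlaps can only occur on the finitely many discontinuities of $f_{a,b,c}^n$, which are of Lebesgue measure zero.
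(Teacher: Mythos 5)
Your proposal is correct and follows essentially the same route as the paper: both rest on the identity $a^{1-\rho}b^{\rho}=1$ from Lemma \ref{Rotation-Lyapunov} together with the uniform bound $|m_n(x)-n\rho|\le 4$ from Lemma \ref{Rational-Rotation}, and arrive at the same constant $r=\min\{(a/b)^4,(b/a)^4\}$. The only cosmetic difference is that the paper applies the derivative bound to the inverse map $f_{a,b,c}^{-1}$ and performs a single change of variables, whereas you bound $(f_{a,b,c}^n)'$ directly and carry out the change of variables branch by branch.
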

\begin{proof}
The condition $f_{a,b,c}(0)=f_{a,b,c}(1)$ means
${\displaystyle c=\frac{1-b}{a-b}}$. Consider
$f_{a_1,b_1,c_1}:=f_{a,b,c}^{-1}$, the inverse map of $f_{a,b,c}$.
It is also a piecewise linear Lorenz map such that
$f_{a_1,b_1,c_1}(0)=f_{a_1,b_1,c_1}(1)$. In fact, we have
\begin{equation}\label{inverse}
a_1=\frac{1}{b},\ \ \ \ \ b_1=\frac{1}{a},\ \ \ \ \
c_1=\frac{(a-1)b}{a-b}.
\end{equation}
Let $\rho:=\rho(f_{a_1,b_1,c_1})$ be its rotation number. Write
 $$
 m^*_n(x)=\# \left\{0 \le i < n:f_{a_1,b_1,c_1}^i(x) \in (c_1,\ 1]
\right\} .
$$
By Lemma \ref{Rotation-Lyapunov},  we have
$a_1^{1-\rho}b_1^{\rho}=1$. Thus for all $ x \notin
C_{a_1,b_1,c_1}$, we have
\begin{eqnarray*}
(f^n_{a_1,b_1,c_1})'(x)&=& a_1^{n-m^*_n(x)}b_1^{m^*_n(x)}\\
& =& a_1^{n(1-\rho)}b_1^{n \rho}\cdot
\left(\frac{b_1}{a_1}\right)^{m^*_n(x)-n \rho}  =
\left(\frac{b_1}{a_1}\right)^{m^*_n(x)-n \rho}.
\end{eqnarray*}
According to  Lemma \ref{Rational-Rotation}, $|m^*_n(x)-n \rho|\le
4$. It follows that for all $ x \notin C_{a_1,b_1,c_1}$ and all $n
\ge 0$ we have
$$r \le (f^n_{a_1,b_1,c_1})'(x)\le 1/r, $$
where $r=\min \{b_1^4 a_1^{-4}, \ b_1^{-4} a_1^{4}\}=\min \{b^4
a^{-4}, \ b^{-4} a^{4}\}$. Consequently, by making a change of
variables we get
\begin{eqnarray*}
 r m(A) \le m(f_{a,b,c}^{-n}(A))
&=& \int {\bf 1}_A(f_{a,b,c}^n(x)) dx\\
&=& \int {\bf 1}_A(y) \cdot (f^{n}_{a_1,b_1,c_1})'(y) dy  \le
m(A)/r.
\end{eqnarray*}
\end{proof}

\subsection{Proof of Theorem A when
$f_{a,b,c}(0)<f_{a,b,c}(1)$} \ \ \

In this case, we are going to show

\begin{proposition}\label{case3}
Suppose $f_{a,b,c}(0)<f_{a,b,c}(1)$. There exists positive integer $n$ such that
$$(f_{a,b,c}^n)'(x)>\lambda>1$$ for all $x \in I$ except finite points.

\end{proposition}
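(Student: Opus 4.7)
The plan is to bound the derivative $(f_{a,b,c}^n)'(x) = a^{n-m_n(x)} b^{m_n(x)}$ uniformly from below by some $\lambda^n > 1$. Taking logarithms, this is $\exp\bigl(n[(1 - m_n(x)/n)\log a + (m_n(x)/n)\log b]\bigr)$, so the task reduces to showing that the affine-in-$m_n(x)/n$ exponent is bounded below by a strictly positive constant uniformly in $x$, for all sufficiently large $n$.

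First I would dispose of the easy cases. If $a > 1$ and $b > 1$ the conclusion is immediate with $n=1$. The condition $f_{a,b,c}(0) < f_{a,b,c}(1)$, i.e.\ $ac + b(1-c) > 1$, rules out $a \le 1$ and $b \le 1$ simultaneously. So it suffices to handle $a > 1 \ge b$; the case $b > 1 \ge a$ is symmetric.

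In the case $a > 1 \ge b$, I compare $f_{a,b,c}$ with the homeomorphism $f_{a_0,b,c}$, where $a_0 = (1 - b(1-c))/c < a$ (strict because $f_{a,b,c}(0) < f_{a,b,c}(1)$). By Lemma \ref{Rotation-Lyapunov}, the rotation number $\rho_0 := \rho(f_{a_0,b,c})$ satisfies $a_0^{1-\rho_0} b^{\rho_0} = 1$ and $F_0^n(x)/n \to \rho_0$ uniformly in $x$. By Lemma \ref{compare}, the lifting $F$ of $f_{a,b,c}$ satisfies $F^n(x) \le F_0^n(x)$ pointwise, and Lemma \ref{rotation-frequence} rewrites $F^n(x) = m_n(x) + f^n(x)$ with $f^n(x) \in [0,1]$. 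Combining these, for every $\varepsilon > 0$ there is $N$ such that $m_n(x)/n \le \rho_0 + \varepsilon$ for all $n \ge N$ and all $x \in I$.

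Plugging into the exponent and using that $\log a > 0 \ge \log b$ makes the expression $(1 - m_n(x)/n)\log a + (m_n(x)/n)\log b$ decreasing in $m_n(x)/n$, so it is bounded below by $(1 - \rho_0 - \varepsilon)\log a + (\rho_0 + \varepsilon)\log b$. Since $a > a_0$ and $1 - \rho_0 > 0$, evaluating at $\varepsilon = 0$ gives a strict inequality
\[
(1-\rho_0)\log a + \rho_0 \log b > (1-\rho_0)\log a_0 + \rho_0 \log b = 0,
\]
so by continuity there exists $\varepsilon > 0$ for which the bound remains a positive constant $\delta > 0$. Hence $(f_{a,b,c}^n)'(x) \ge e^{n\delta}$ on $I \setminus C_{a,b,c}^{(n)}$, where $C_{a,b,c}^{(n)}$ is the finite set of discontinuities of $f_{a,b,c}^n$. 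Choosing $n$ with $e^{n\delta} \ge 2$ gives $\lambda = 2$. The main obstacle is extracting the \emph{uniform} bound $m_n(x)/n \le \rho_0 + \varepsilon$; this uses both the pointwise comparison of liftings and the uniform convergence in Lemma \ref{Rotation-Lyapunov}, with the strict positivity of $\delta$ depending crucially on the strict inequality $a > a_0$ provided by the hypothesis $f_{a,b,c}(0) < f_{a,b,c}(1)$.
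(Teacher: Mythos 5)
Your proposal is correct and follows essentially the same route as the paper: reduce to the case $a>1\ge b$, compare with the homeomorphism $f_{a_0,b,c}$ via Lemmas \ref{rotation-frequence}, \ref{Rotation-Lyapunov} and \ref{compare} to get a uniform upper bound on $m_n(x)/n$ near $\rho_0$, and exploit $a>a_0$ together with $a_0^{1-\rho_0}b^{\rho_0}=1$ to make the Lyapunov exponent strictly positive. The only cosmetic difference is that you bound $m_n(x)/n$ directly and use monotonicity of the exponent, whereas the paper factors $(f_{a,b,c}^n)'$ through $(f_{a_0,b,c}^n)'\cdot(a/a_0)^{1-\widetilde m_n(x)/n}$; the substance is identical.
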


\begin{proof}

The condition $f_{a,b,c}(0)<f_{a,b,c}(1)$ means $ac+(1-c)b>1$.
 So we must have $a>1$ or $b>1$. If we have both $a>1$ and
$b>1$. We take $n=1$. It remains to consider two cases: $a > 1 \ge b$ and $b
> 1 \ge a$.

First we assume  $a > 1 \ge b$. Let $f_{a_0,b,c}$ with
$a_0:=\frac{1-b(1-c)}{c}<a$, which is the homeomorphism defined by
(\ref{homeo1})(see Figure 2). We denote by $\rho_0$ the rotation
number of $f_{a_0,b,c}$. Notice that $0<\rho_0<1$.

For $x \in [0,\ 1]$, denote
$$
               m_n(x)=\sum_{i=0}^{n-1}{\bf 1}_{(c,\ 1]}(f_{a,b,c}^i(x)),
              \quad
\widetilde{m}_n(x)=\sum_{i=0}^{n-1}{\bf 1}_{(c,\
1]}(f_{a,b_0,c}^i(x)).
$$
According to Lemma \ref{rotation-frequence}, $\widetilde{m}_n(x)/n$
converges uniformly to $\rho_0$ as $n \to \infty$. So
$a_0^{1-\widetilde{m}_n(x)/n}b^{\widetilde{m}_n(x)/n}$ converges
uniformly to $1$ as $n \to \infty$ because
$a_0^{1-\rho_0}b^{\rho_0}=1$ (Lemma \ref{uniform}). Choose a
sufficiently small $\varepsilon_0>0$ such that
$$
(1-\varepsilon_0)\left(\frac{a}{a_0}\right)^{(1-\rho_0)/2}>1.
$$
There exists a positive integer $N_0\ge 1$ such that for all $x\in
[0,1]\backslash C_{a_0,b,c}$ and $ \forall n \ge N_0$, we have
$$
((f_{a_0,b,c}^n)'(x))^{\frac{1}{n}} = a_0^{1-\widetilde{m}_n(x)/n}
b^{\widetilde{m}_n(x)/n}>1-\varepsilon_0.
$$
Since $f_{a_0,b,c} \ge f_{a,b,c}(x)$, by Lemma
\ref{rotation-frequence} and  Lemma \ref{compare}, it is easy to see
that $m_n(x)\le \widetilde{m}_n(x)$ for $x \notin
C_*(n):=\bigcup_{i=0}^n(f^{-i}_{a,b,c}(c) \cup
f^{-i}_{a_0,b,c}(c))$. So, for large $n$ and $x \notin C_*(n)$, we
have
\begin{eqnarray} \label{expansive}
((f^n_{a,b,c})'(x))^{\frac{1}{n}}&=& a
\left(\frac{b}{a}\right)^{m_n(x)/n} \nonumber \\
&\ge&a \left(\frac{b}{a}\right)^{\widetilde{m}_n(x)/n} \nonumber \\
&=& ((f_{a_0,b,c}^n)'(x))^{\frac{1}{n}}
\left(\frac{a}{a_0}\right)^{1-\widetilde{m}_n(x)/n} \nonumber \\
& >& (1-\varepsilon_0)\left(\frac{a}{a_0}\right)^{(1-\rho_0)/2}
 > 1.
 \end{eqnarray}
This implies that for $n$ large we have $(f_{a,b,c}^{n})'(x)>1$ for
all $x \in I\backslash C_*(n)$. Obviously, $C_*(n)$ is consists of
finite points. So  $f_{a,b,c}^n$ is linear with slope greater than
$1$ on each component of $I\backslash C_*(n)$. It follows that
there exists positive integer $n$ and $\lambda>1$ such that
$(f_{a,b,c}^{n})'(x)>\lambda
>1$ for all $x \in I\backslash C_*(n)$.

The proof for  the case $b> 1\ge a$ is similar. We consider
$f_{a,b_0,c}$ with $b_0=\frac{1-ac}{1-c}<b$, which is the
homeomorphism defined by (\ref{homeo2}).
\end{proof}

\smallskip

\subsection{Proof of Theorem A when $f_{a,b,c}(0)>f_{a,b,c}(1)$}
\ \ \ \ \ \ \ \ \ \

We finish the proof of Theorem A by showing the following
proposition.

\begin{proposition}\label{case4}
Suppose $f_{a,b,c}(0)>f_{a,b,c}(1)$. There exists positive integer $n$ such that
$$(f_{a,b,c}^n)'(x)<\lambda<1$$ for all $x \in I$ except finite points.

\end{proposition}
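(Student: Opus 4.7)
The plan is to mirror the structure of Proposition \ref{case3}: the hypothesis $f_{a,b,c}(0)>f_{a,b,c}(1)$ rewrites as $ac+b(1-c)<1$, which forces at least one of $a,b$ to be strictly less than $1$. If both $a<1$ and $b<1$, then $f'_{a,b,c}(x)\leq\max(a,b)<1$ at every regularity point and $n=1$ already works. Otherwise exactly one slope is $\geq 1$, giving two symmetric subcases, $a\geq 1>b$ and $a<1\leq b$. In each, I would compare $f_{a,b,c}$ with the homeomorphic piecewise linear Lorenz map sharing the same $c$ but with slopes adjusted so that $f(0)=f(1)$, and then exploit Lemma \ref{Rotation-Lyapunov}, which pins down both the rotation number $\rho_0$ of the comparison map and its vanishing Lyapunov exponent.

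Focus on the subcase $a\geq 1>b$. Set $a_0=(1-b(1-c))/c$; then $f_{a_0,b,c}$ is a homeomorphism and the condition $ac+b(1-c)<1$ gives $a<a_0$. A direct comparison of the affine branches yields $f_{a,b,c}(x)\geq f_{a_0,b,c}(x)$ on $[0,1]$, so the degree-one liftings satisfy $F^n\geq F_0^n$ for every $n$. Applying Lemma \ref{rotation-frequence} to both maps and using $|f_{a,b,c}^n(x)-f_{a_0,b,c}^n(x)|\leq 1$ gives the key pointwise bound
\[
m_n(x)\geq \widetilde{m}_n(x)-1 \qquad (\forall x\in[0,1],\ \forall n\geq 1),
\]
where $m_n$ and $\widetilde{m}_n$ count returns into $(c,1]$ under $f_{a,b,c}$ and $f_{a_0,b,c}$. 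Lemma \ref{Rotation-Lyapunov} then supplies the uniform convergence $\widetilde{m}_n(x)/n\to \rho_0$ together with $a_0^{1-\rho_0}b^{\rho_0}=1$, and a short calculation shows $\rho_0\in(0,1)$.

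Given any fixed $\e_0>0$, for all sufficiently large $n$ we obtain $m_n(x)/n\geq \rho_0-\e_0$ off the finite set $C_*(n):=\bigcup_{i=0}^{n}(f_{a,b,c}^{-i}(c)\cup f_{a_0,b,c}^{-i}(c))$. Since $b/a<1$, the identity $(f_{a,b,c}^n)'(x)=a^{n-m_n(x)}b^{m_n(x)}$ upgrades to
\[
\bigl((f_{a,b,c}^n)'(x)\bigr)^{1/n}\leq a\cdot (b/a)^{\rho_0-\e_0}=\bigl(a/a_0\bigr)^{1-\rho_0}\cdot (a/b)^{\e_0}.
\]
Because $a<a_0$ and $1-\rho_0>0$, the first factor is a fixed constant strictly less than $1$, so choosing $\e_0$ small enough forces the right-hand side below some $\lambda<1$. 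Iterating, $(f_{a,b,c}^n)'(x)<\lambda^n<1$ for $n$ large and $x\notin C_*(n)$, which is the required contraction.

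The subcase $a<1\leq b$ is treated identically after replacing $a_0$ by $b_0=(1-ac)/(1-c)>b$ and comparing $f_{a,b,c}$ with $f_{a,b_0,c}$; this time $f_{a,b,c}\leq f_{a,b_0,c}$, the bound reverses to $m_n(x)\leq \widetilde{m}_n(x)+1$, and since now $a/b<1$ the same computation (exploiting $a^{1-\rho_0}b^{\rho_0}=(b/b_0)^{\rho_0}<1$) again delivers contraction. The principal obstacle is entirely bookkeeping: Lemma \ref{compare} pairs the two orderings of $a,b$ relative to $1$ with \emph{opposite} bounds on the rotation number, and one must match this with the sign of $\log(b/a)$ so that the geometric-mean factor $a^{1-m_n/n}b^{m_n/n}$ is driven \emph{below} the comparison map's value $1$ rather than above it. Once the correct partner map is chosen in each subcase, the argument is a mirror image of the expansion estimate in Proposition \ref{case3}.
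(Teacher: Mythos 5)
Your proposal is correct and follows essentially the same route as the paper: compare $f_{a,b,c}$ with the homeomorphic map $f_{a_0,b,c}$ (resp.\ $f_{a,b_0,c}$), use the uniform convergence $\widetilde{m}_n/n\to\rho_0$ and the identity $a_0^{1-\rho_0}b^{\rho_0}=1$ from Lemma \ref{Rotation-Lyapunov}, and conclude that $((f_{a,b,c}^n)')^{1/n}$ is eventually bounded by a constant $<1$ off a finite set. The only (immaterial) differences are that you work out explicitly the subcase $a\ge 1>b$ where the paper details $a<1\le b$, and you obtain the return-count comparison $m_n\ge\widetilde{m}_n-1$ directly from the lifting identity of Lemma \ref{rotation-frequence} rather than invoking Lemma \ref{compare}.
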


\begin{proof}

The condition $f_{a,b,c}(0)>f_{a,b,c}(1)$ means $ac+(1-c)b<1$. So we
must have $a<1$ or $b<1$. If we have both $a<1$ and $b<1$, we take
$n=1$. It remains to consider two cases: $a < 1\le b$ and $a\ge
1>b$.

First we assume  $a <1 \le b$. Consider $f_{a,b_0,c}$ with
$b_0=\frac{1-ac}{1-c}>b$, which is the homeomorphism defined by
(\ref{homeo2})(see Figure 2). We denote by $\rho_1$ the rotation
number of $f_{a,b_0,c}$. Obviously,  $0<\rho_1<1$.


For $x \in [0,\ 1]$, put
$$
               m_n(x)=\sum_{i=0}^{n-1}{\bf 1}_{(c,\ 1]}(f_{a,b,c}^i(x)),
              \quad
\widetilde{m}_n(x)=\sum_{i=0}^{n-1}{\bf 1}_{(c,\
1]}(f_{a,b_0,c}^i(x)).
$$
According to Lemma \ref{rotation-frequence}, $\widetilde{m}_n(x)/n$
converges uniformly to $\rho_0$ as $n \to \infty$. So
$a^{1-\widetilde{m}_n(x)/n}b_0^{\widetilde{m}_n(x)/n}$ converges
uniformly to $1$ as $n \to \infty$ because
$a^{1-\rho_1}b_0^{\rho_1}=1$ (Lemma \ref{uniform}). Choose a
sufficiently small $\varepsilon_1>0$ such that
$$
(1+\varepsilon_1)\left(\frac{b}{b_0}\right)^{\rho_1/2}<1.
$$
There exists a positive integer $N_0\ge 1$ such that for all $x\in
[0,1]\backslash C_{a,b_0,c}$ and $ \forall n \ge N_0$, we have
$$
((f_{a,b_0,c}^n)'(x))^{\frac{1}{n}} =
a^{1-\frac{\widetilde{m}_n(x)}{n}}
b_0^{\frac{\widetilde{m}_n(x)}{n}}<1+\varepsilon_1.
$$
Since $f_{a,b,c} \le f_{a,b_0,c}(x)$, by Lemma
\ref{rotation-frequence} and  Lemma \ref{compare}, it is easy to see
that $m_n(x)\le \widetilde{m}_n(x)$ for $x \notin
C^*(n):=\bigcup_{i=0}^n(f^{-i}_{a,b,c}(c) \cup
f^{-i}_{a,b_0,c}(c))$. So, for large $n$ and $x \notin C^*(n)$, we
have
\begin{eqnarray} \label{expansive}
((f^n_{a,b,c})'(x))^{\frac{1}{n}}&=& a
\left(\frac{b}{a}\right)^{m_n(x)/n} \nonumber \\
&\le&a \left(\frac{b}{a}\right)^{\widetilde{m}_n(x)/n} \nonumber \\
&=& ((f_{a,b_0,c}^n)'(x))^{\frac{1}{n}}
\left(\frac{b}{b_0}\right)^{\widetilde{m}_n(x)/n} \nonumber \\
& <& (1+\varepsilon_1)\left(\frac{b}{b_0}\right)^{\rho_1/2}
 < 1.
 \end{eqnarray}

This implies that for $n$ large we have $(f_{a,b,c}^{n})'(x)<1$ for
all $x \in I\backslash C^*(n)$. Since $f_{a,b,c}^n$ is a piecewise
contract linear map with at most finite pieces, there exists
$\lambda<1$ such that  $(f_{a,b,c}^{n})'(x)<\lambda <1$ for all $x
\in I$ except at most finite points.

The proof for  the case $a \ge 1 > b$ is similar. We consider
$f_{a_0,b,c}$ with $a_0=\frac{1-(1-c)b}{c}>a$, which is the
homeomorphism defined by (\ref{homeo1}).
\end{proof}

\smallskip

\smallskip

\subsection{Diffeomorphic conjugacy.}

 A partial result of Theorem A may be obtained in a
different way. If
\begin{equation}\label{condition}
c\sqrt{a}+(1-c)\sqrt{b}>1,
\end{equation}
which is stronger than $f_{a,b,c}(0) \ge f_{a,b,c}(1)$,
 it is possible to find some diffeomorphism
$h$ such that $h\circ f_{a,b,c}\circ h^{-1}$ is piecewise
expanding. We do find such a  diffeomorphism among the
one-parameter group of transformations $h_s:[0,1] \to [0,1]$
($s\in \mathbb{R}_+$) defined by
$$h_s(x)=\frac{sx}{1+(s-1)x}.$$

We can also prove that the above condition (\ref{condition}) is
actually necessary for the existence of such a diffeomorphism $h_s$.
This was one starting point of our study on acim of piecewise linear
Lorenz map.

\setcounter{equation}{0}

\section{Equivalence}

Let $f:=f_{a,b,c}$ be a piecewise linear Lorenz map with
$ac+b(1-c)>1$. The acim $\mu$ is not necessarily equivalent to the
Lebesgue measure $m$, even if $f$ is strongly expanding. Parry
\cite{P3} proved that the acim of $f_{a,a,1/2}$ is not equivalent to
the Lebesgue measure if and only if $1<a<\sqrt{2}$. We first show
that the equivalence between its acim and the Lebesgue measure  is
nothing but the transitivity of $f$.

\subsection{Equivalence and transitivity}
\begin{lemma}\label{transitive} Let $f:=f_{a,b,c}$ be a piecewise linear Lorenz map with $ac+b(1-c)>1$.
Then the acim of $f$ is equivalent to the Lebesgue measure if and
only if $f$ is transitive.\end{lemma}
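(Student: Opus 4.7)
My plan is to reduce the equivalence to two ingredients: ergodicity of the acim, and forward-invariance of its topological support, combined with a structural fact that the density of the acim is positive almost everywhere on its essential support. Since $ac+b(1-c)>1$, Theorem~A(3) applies: $f$ admits a unique acim $\mu$ whose density $h$ is of bounded variation. Uniqueness of $\mu$ automatically gives ergodicity, for otherwise an ergodic component would itself be an acim distinct from $\mu$.

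For the implication $\mu\sim m \Rightarrow f$ transitive: since $\mu\sim m$ and $m$ has full topological support, $\mathrm{supp}(\mu)=I$. Birkhoff's ergodic theorem (applied to a countable dense family of continuous test functions) produces a $\mu$-full, hence $m$-full, set of points whose forward orbit equidistributes against $\mu$, and therefore is dense in $\mathrm{supp}(\mu)=I$. Existence of a single dense orbit immediately yields that $\bigcup_{n\ge 0} f^n(U)$ is dense in $I$ for every nonempty open $U$, which is transitivity in the sense stated.

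For the converse, $f$ transitive $\Rightarrow \mu\sim m$, set $S=\mathrm{supp}(\mu)$. I would proceed in three steps. First, establish forward-invariance off the critical point, $f(S\setminus\{c\})\subseteq S$: if $x\in S\setminus\{c\}$ and $f(x)\notin S$, pick an open $V\ni f(x)$ with $\mu(V)=0$; continuity of $f$ at $x$ yields a neighborhood $U\ni x$ with $f(U)\subseteq V$, so $\mu(U)\le\mu(f^{-1}(V))=\mu(V)=0$ by invariance of $\mu$, contradicting $x\in S$. Second, show $\mathrm{int}(S)\neq\emptyset$: because $h$ is BV with $\int h\,dm=1$, the set $\{h>0\}$ has positive measure; since $h$ is continuous off a countable set, $h$ is strictly positive on an open neighborhood of some continuity point, and that neighborhood lies in $\mathrm{int}(S)$. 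Third, apply transitivity to $V:=\mathrm{int}(S)$: the set $\bigcup_{n\ge 0}f^n(V)$ is dense in $I$, while by the first step and induction it is contained in $S\cup C$, where $C=\bigcup_{n\ge 0}f^n(c)$ is at most countable; closedness of $S$ then forces $S=I$.

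The main obstacle is the final passage from $S=I$ to $\mu\sim m$, i.e.\ from $\overline{\{h>0\}}=I$ to $h>0$ $m$-a.e. A generic BV density may satisfy $\overline{\{h>0\}}=I$ while vanishing on a fat Cantor set, so this step is not purely about BV but uses the dynamics. Here I would invoke the structural result, standard in the Lasota--Yorke and Keller theory of piecewise expanding interval maps and applicable because the proof of Theorem~A furnishes a piecewise expanding iterate $f^{N}$, that the essential support $\overline{\{h>0\}}$ is a finite union of closed intervals on each of which $h>0$ almost everywhere. Combined with $\overline{\{h>0\}}=S=I$, this yields $h>0$ $m$-a.e.\ on $I$, hence $\mu\sim m$.
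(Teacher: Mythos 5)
Your overall route is essentially the paper's: reduce equivalence of $\mu$ and $m$ to $\mathrm{supp}(\mu)=I$, use bounded variation of the density to find a nondegenerate interval inside the support, and spread it by transitivity. You are in fact more careful than the paper at the crucial point: the paper simply asserts that $\mathrm{supp}(\mu)=I$ implies $h>0$ a.e., which is exactly the obstacle you isolate and settle by invoking the Lasota--Yorke/Keller structure theorem for the piecewise expanding iterate $f^{N}$ furnished by the proof of Theorem A (the support of an acim of a piecewise expanding map is a finite union of intervals on which the density is positive a.e.); that is a legitimate and necessary supplement. Your Birkhoff/dense-orbit argument for the direction $\mu\sim m\Rightarrow$ transitivity is also correct, if heavier than needed: one can instead note that if $f$ is not transitive there is a nonempty open $U$ with $W:=I\setminus\overline{\bigcup_{n}f^{n}(U)}$ nonempty and open, and either $\mu(U)=0$ or, by ergodicity, almost every orbit eventually enters $U$ and thereafter avoids $W$, so $\mu(W)=0$; either way $\mu\not\sim m$.

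The one inference that does not close as written is the end of your Step 3: from ``$\bigcup_{n\ge 0}f^{n}(V)$ is dense in $I$ and contained in $S\cup C$ with $S$ closed and $C$ countable'' one cannot conclude $S=I$, because a countable set can be dense (take $S=\{0\}$ and $C=\Q\cap I$ to see the abstract implication fail). The repair is short and uses structure you already have: $V$ is open, and since $f$ is monotone and continuous on $[0,c)$ and on $(c,1]$, each $f^{n}(V)$ is a finite union of nondegenerate intervals. If $J$ is such an interval and $J\subseteq S\cup C$, then $J\setminus C\subseteq S$ is dense in $J$, so $J\subseteq\overline{J\setminus C}\subseteq S$ by closedness of $S$. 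Hence $\bigcup_{n}f^{n}(V)\subseteq S$ and density gives $I=\overline{\bigcup_{n}f^{n}(V)}\subseteq S$. With that patch, together with your structure-theorem step, the proof is complete and agrees with (indeed tightens) the paper's argument.
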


\begin{proof} Let $h$ be the density of the  acim $\mu$, i.e.
\begin{equation}\label{acim} \mu(A)=\int_A h(x) d x,\ \ \ \ \forall A \in
\mathscr{B}, \end{equation} and let $\mbox{\rm supp}\ (\mu)$ be the
support of $\mu$. Obviously, $\mbox{\rm supp} (\mu)$ is an invariant
closed set of $f$.

The measure $\mu$ is equivalent to the Lebesgue measure $m$ if and
only if $\mbox{\rm supp} (\mu)=I$. In fact, $\mbox{\rm supp}
(\mu)=I$ means $h(x)>0$ for $m$-a.e. $x \in I$. By (\ref{acim}),
$\mu(A)=0$ implies $m(A)=0$.

Now we show that $\mbox{\rm supp}(\mu)=I$ if and only if $f$ is
transitive. At first, it is easy to see the non transitivity of $f$
implies $I\backslash \mbox{\rm supp}\ (\mu)$ is nonempty. On the
other hand, notice that $\mbox{\rm supp} (\mu)$ contains some
interval $J$ because  $h$ is of bounded variation. So, the
transitivity of $f$ implies
$$
I=\overline{\bigcup_{n=0}^{\infty}f^n(J)}\subseteq \mbox{\rm supp}\
(\mu).
$$
\end{proof}

Now we are going to discuss the transitivity of piecewise linear
Lorenz maps by using the renormalization theory  of expanding Lorenz
map (cf. \cite{Dy, GS}).

\subsection{Proof of Theorem B} \ \ \

Let $f:=f_{a,b,c}$ be a piecewise linear Lorenz map with
$ac+b(1-c)>1$, $\kappa$ be the minimal period.

If $\kappa=1$, then $f$ is not renormalizable (\cite{Dy}), which
implies that $f$ is transitive.

In what follows we assume $\kappa >1$. Let $O$ be the unique
$\kappa$-periodic orbit, and $P_L$ and $P_R$ be adjacent
$\kappa-$periodic points so that $[P_L,\ P_R]$ contains the critical
point $c$.  By Lemma \ref{minimal period}, $f^{\kappa}$ is
continuous and linear on $[P_L,\ c)$ and on $(c,\ P_R]$. Put
$$A:=f^{\kappa}(c+), \ \ \ \ \ \ B:=f^{\kappa}(c-).$$
We discuss the transitivity of $f$ by distinguish the following
three cases:
\begin{enumerate}
\item $[A,\ B] \backslash [P_L,\ P_R] \neq \emptyset$;

\item $[A,\ B]=[P_L,\ P_R]$;

\item $[A,\ B] \subsetneqq [P_L,\ P_R]$.

\end{enumerate}


{\em Case (1)}. In this case, we have  $D:=\overline{\bigcup_{n\ge
0}f^{-n}(O)} \neq O$. It follows from Lemma \ref{dic} that $f$ is
not renormalizable. So $f$ is transitive.

If $[A,\ B] \subseteq [P_L,\ P_R]$, $f$ admits a renormalization
\[
Rf(x)=\left \{
\begin{array}{ll}
f^{\kappa}(x) & x \in [A,\  c) \\
f^{\kappa}(x) & x \in (c,\  B].
\end{array}
\right.
\]

{\em Case (2)}. In this case, $Rf$ is not renormalizable because it
admits fixed point. So $Rf$ is a transitive Lorenz map on $[P_L,\
P_R]$. By equation (\ref{equality}) in Lemma \ref{minimal period},
$f$ is also transitive.

{\em Case (3)}. In this case, we have $P_L<A$ or $B<P_R$. Assume
 that $P_L<A$. Since $[A,\ B] \subseteq [P_L,\ P_R]$, it follows  $(P_L,\ A) \bigcap \ (\bigcup_{n \ge 0}
f^{n}([A,\ B]))=\emptyset$, which indicates $f$ is not transitive.
Similarly,  $B<P_R$ implies $f$ is not transitive.

Now we consider the special case $\kappa=2$. According to Lemma
\ref{minimal period}, $\kappa=2$ implies $f$ admits no fixed point,
$A=f(0)=1-ac<b(1-c)=f(1)=B$ and $c \in [A,\ B]$.

A simple computation shows
\[ f^2(x)=\left \{ \begin{array}{lcl}
a^2x+(1-ac)(a+1) & \mbox{\rm if} &  x \in [0,  \frac{ac+c-1}{a}], \\
abx-abc+b-bc & \mbox{\rm if} &  x \in (\frac{ac+c-1}{a}, c ], \\
abx-abc-ac+1& \mbox{\rm if} &  x \in (c,\frac{bc+c}{b}],\\
b^2x-b^2c-bc & \mbox{\rm if} &  x \in (\frac{bc+c}{b},1 ].
\end{array}
\right. \label{map2}
\]
It follows that the map $f$ admits two $2$-periodic points:
\[
P_L=\frac{abc+bc-b}{ab-1}, \ \ \ \ \ \ \ \ \ \
P_R=\frac{abc+ac-1}{ab-1}\label{periodic pts}.
\]

So,  $[A,\ B]\subseteq [P_L,\ P_R]$ is equivalent to
$$
A \geq \frac{abc+bc-b}{ab-1} \ \ \ \ \ \ \ \ \mbox{\rm and}   \ \ \
\ \ \ \ B \leq \frac{abc+ac-1}{ab-1}
$$
or equivalently
$$ab \leq 1+\frac{B-c}{c-A}\ \ \ \ \ \ \ \mbox{\rm and} \ \ \ \ \ \ \
 \ \ \ \ \ ab \leq 1+\frac{c-A}{B-c}. $$
 Recall $M:=\min \left\{\frac{B-c}{c-A},\
\frac{c-A}{B-c}\right\}$. We have $[A,\ B] \subsetneqq [P_L,\ P_R]$
if and only if
  \[\left \{ \begin{array}{lcl}
1<ab\leq 1+M &  \mbox{\rm if} & M<1 \\
1<ab \le 2 &  \mbox{\rm if} &  M=1.
\end{array}
\right.
\]
In other words, $f$ is transitive if and only if
 \[\left \{ \begin{array}{lcl}
ab>1+M & \mbox{\rm if} & M<1 \\
ab\ge 2 & \mbox{\rm if} &  M=1.
\end{array}
\right.
\]

\vspace{0.5cm}

\setcounter{equation}{0}

\section{The densities of the acims}

We finish the paper by pointing out how to obtain the density of the
acim in some special cases.

\subsection{$\beta$-transformation} \ \ \

The first case is the special Lorenz maps $f_{a,a,c}$ $(a>1)$. It
was known that they admit their acims (see also Theorem A). Gelfond
\cite{Gel} and  Parry \cite{P1,P} had determined the density of the
acim of $f_{a,a,c}$, which is up to a multiplicative constant equal
to
\begin{equation}
g(x)=\sum_{f_{a,a,c}^n(0)<x} \frac{1}{a^n}-\sum_{f_{a,a,c}^n(1)>x}
\frac{1}{a^n}. \label{density}
\end{equation}

Suppose that the acim of  $f_{a,b,c}$ exists but is not equivalent
to the Lebesgue measure.
 From the proof of Theorem B  (see
Section 3) we have seen that the restriction of $f^{\kappa}_{a,b,c}$
on $[f_{a,b,c}^{\kappa}(c+),f^{\kappa}_{a,b,c}(c-)]$, where $\kappa$
is the minimal period of periodic points of $f_{a,b,c}$, is a
piecewise Linear Lorenz map of the form $f_{a_*,a_*,c_*}$ on the
renormalization interval
$[f_{a,b,c}^{\kappa}(c+),f^{\kappa}_{a,b,c}(c-)]$. Thus, we can
obtain the density $g_*(x)$ of the acim of $f_{a_*,a_*,c_*}$ by
using (\ref{density}). Then we get the density of the acim of
$f_{a,b,c}$:
   \begin{equation}
  g_{a,b,c}(x)=\frac{1}{\kappa}\left[g_*(x)+P_{a,b,c}g_*(x)+\cdots+P_{a,b,c}^{\kappa-1}g_*(x)\right],
   \end{equation}
where $P_{a,b,c}$ is the Frobenius-Perron operator associated to
$f_{a,b,c}$. Actually we can easily check that $$P_{a,b,c}^{\kappa}
g_*(x)=g_*(x).$$


\subsection{Piecewise linear Markov map}\ \ \

The second case is $f_{\beta,\alpha,1/\beta}\ (\beta>1, 0<\alpha\le
\frac{\beta}{\beta-1})$, which is the piecewise linear Lorenz map
$S_{\beta, \alpha}$ studied by Dajani et al in \cite{DHK}. Remember
that $S_{\beta, \alpha}$ is defined by equation
(\ref{LinearLorenzMap10}), and we only assume $\beta>1$ rather than
$1<\beta<2$ in \cite{DHK}. If
$\alpha=\frac{1}{\beta^{k-1}(\beta-1)}$ for some integer $k\ge 1$,
then $f_{\beta,\alpha,1/\beta}$ is a piecewise linear Markov map.

\begin{proposition} \label{non-linear} Assume $\beta>1$ and
$\alpha=\frac{1}{\beta^{k-1}(\beta-1)}$ for some integer $k\ge 1$.
Then the density of the acim of $f_{\beta,\alpha,1/\beta}$ is up to
a multiplicative constant equal to
$$
g_{\beta,k}(x)=\frac{1}{\beta-1} {\bf 1}_{[0,\ 1]}(x)+ \sum_{i=1}^k
\beta^{i-1}{\bf 1}_{[0,\ \beta^{-i}]}(x).
$$
\end{proposition}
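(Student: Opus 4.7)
The plan is to verify directly that the proposed density $g_{\beta,k}$ is a fixed point of the Frobenius--Perron operator $P := P_{\beta,\alpha,1/\beta}$, and then invoke the uniqueness of the acim (Theorem A, case (3)) to identify it, up to normalization, with the density of the acim of $f_{\beta,\alpha,1/\beta}$.

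First I would specialize the operator. With $a=\beta$, $b=\alpha$, $c=1/\beta$, and the particular choice $\alpha = 1/(\beta^{k-1}(\beta-1))$, one has $1-ac = 0$ and $b(1-c) = \alpha(\beta-1)/\beta = 1/\beta^k$, so
$$
Ph(x) \;=\; \frac{{\bf 1}_{[0,1]}(x)}{\beta}\,h\!\left(\tfrac{x}{\beta}\right) \;+\; \frac{{\bf 1}_{[0,1/\beta^k]}(x)}{\alpha}\,h\!\left(\tfrac{x}{\alpha}+\tfrac{1}{\beta}\right).
$$
The structural observation driving everything is that the image $[0,1/\beta^k]$ of the right branch fits exactly on the geometric grid $\{0,\beta^{-k},\beta^{-k+1},\ldots,\beta^{-1},1\}$, which is therefore a Markov partition for the map. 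Writing $A_0=[0,\beta^{-k})$, $A_j=[\beta^{-(k-j+1)},\beta^{-(k-j)})$ for $1\le j\le k-1$, and $A_k=[\beta^{-1},1]$, the dynamics is $A_0\to A_0\cup A_1$ and $A_j\to A_{j+1}$ ($1\le j\le k-1$) via the left branch of slope $\beta$, together with $A_k\to A_0$ via the right branch of slope $\alpha$.

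Next I would rewrite $g_{\beta,k}$ in terms of this partition. Collecting indicators shows that $g_{\beta,k}$ is constant on each $A_j$, with value $\gamma_j := \beta^{k-j}/(\beta-1)$. Since $g_{\beta,k}$ is piecewise constant on a Markov partition, the equation $Pg_{\beta,k}=g_{\beta,k}$ reduces to a short finite transfer system: for each $j \ge 1$ the only pre-image of $A_j$ lies in $A_{j-1}$ via slope $\beta$, forcing $\gamma_{j-1}/\beta = \gamma_j$; on $A_0$, both $A_0$ (slope $\beta$) and $A_k$ (slope $\alpha$) contribute, giving $\gamma_0/\beta + \gamma_k/\alpha = \gamma_0$. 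Both identities follow in one line from the definitions of $\gamma_j$ and $\alpha$.

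Finally, since $ac + b(1-c) = 1 + 1/\beta^k > 1$, case (3) of Theorem A guarantees a unique acim, so any nonnegative $L^1$ fixed point of $P$ must be a positive scalar multiple of its density; this finishes the proof. The only real obstacle is notational bookkeeping: one has to identify correctly the ranges of the indicators ${\bf 1}_{[0,\beta^{-i}]}$ on each $A_j$ so that the constants $\gamma_j$ emerge cleanly. Once the Markov partition is recognized there is essentially no analysis left to perform.
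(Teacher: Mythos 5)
Your proposal is correct and takes essentially the same route as the paper: both recognize the Markov partition $0<\beta^{-k}<\beta^{-(k-1)}<\cdots<\beta^{-1}<1$ and verify directly that $P g_{\beta,k}=g_{\beta,k}$, the only difference being that the paper computes $P$ on the nested indicators ${\bf 1}_{[0,\ \beta^{-i}]}$ while you work in the equivalent basis of partition atoms $A_j$ with constants $\gamma_j=\beta^{k-j}/(\beta-1)$. Your explicit appeal to the uniqueness of the acim (Theorem A, case (3)) to identify the fixed point with the density is a detail the paper leaves implicit, and is a welcome addition.
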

\proof Let  $\cC$ be the partition of $[0,1]$ given by
$0<\beta^{-k}<\beta^{-(k-1)}< \ldots <\beta^{-2} <\beta^{-1}<1.$ One
can easily check that $f_{\beta,\alpha,1/\beta}$ is a piecewise
linear Markov map with respect to the partition $\cC$. Let $P$ be
the Perron-Frobenius operator of $f_{\beta,\alpha,1/\beta}$ (see
Section 2.3).

Note that
\begin{eqnarray*}
P 1 (x) &=& \frac{1}{\beta} {\bf 1}_{[0,\ 1]}(x) +(\beta-1)\beta^{k-1} {\bf 1}_{[0,\ \beta^{-k}]}(x) \\
P {\bf 1}_{[0,\ \beta^{-k}]}(x)&=&\frac{1}{\beta} {\bf 1}_{[0,\ \beta^{-(k-1)}]}(x)\\
&\vdots&\\
P {\bf 1}_{[0,\ \beta^{-2}]}(x)&=& \frac{1}{\beta} {\bf 1}_{[0,\ \beta^{-1}]}(x) \\
P {\bf 1}_{[0,\ \beta^{-1}]}(x) &=& \frac{1}{\beta} {\bf 1}_{[0,\
1]}(x).
\end{eqnarray*}
We obtain
\begin{eqnarray*}
P g_{\beta,k}(x) &=& \frac{1}{\beta-1} P {\bf 1}_{[0,\ 1]}(x) +
\sum_{i=1}^k \beta^{i-1} P{\bf 1}_{[0,\ \beta^{-i}]}(x) \\
&=& \frac{1}{\beta(\beta-1)} {\bf 1}_{[0,\ 1]}(x) + \beta^{k-1} {\bf
1}_{[0,\ \beta^{-k}]}(x) +
\sum_{i=2}^k \beta^{i-2}{\bf 1}_{[0,\ \beta^{-(i-1)}]}(x) + \frac{1}{\beta}{\bf 1}_{[0,\ 1]}(x) \\
&=& \frac{1}{\beta-1} {\bf 1}_{[0,\ 1]}(x) +\sum_{i=1}^k \beta^{i-1} {\bf 1}_{[0,\ \beta^{-i}]}(x) \\
&=&g_{\beta,k}(x).
\end{eqnarray*}
$\hfill \Box$

The special map of the form $f_{1,n,1-1/n}$ ($n \ge 2$ being an
integer) is also a piecewise linear Markov map. It was proved in
\cite{DF2} that its density is equal to
$$
g_{n}(x)=\frac{2}{n+1}\sum^{n-1}_{i=0}{\bf 1}_{[ \frac{i}{n},\
1]}(x).
$$

\vspace{0.2cm}


\vspace{0.1cm} \noindent DING Yi Ming: Wuhan Institute of Physics
and Mathematics, The Chinese Academy of Sciences, Wuhan  430071,
P.R.China \\
\noindent {\it Email:\ } ding@wipm.ac.cn \\

\vspace{0.1cm} \noindent FAN Ai Hua: Department of Mathematics,
LAMFA, UMR 6140 CNRS , University of Picardie, 33 Rue Saint Leu,
80039 Amiens Cedex 1, France \\
 \noindent {\it Email:\ } ai-hua.fan@u-picardie.fr\\

\vspace{0.1cm} \noindent YU Jing Hu: Department of Mathematics,
Wuhan University of Technology, Wuhan 430070, P.R.China \\
 \noindent {\it Email:\ } yujh67@126.com\\

\begin{thebibliography}{99}


\bibitem{ABS} V. S. Afraimovich, V. V. Bykov, and L. P. Shil'nikov,
 \emph{On the appearance and structure of the Lorenz attractor},
     Dokl. Acad. Sci.
USSR, \textbf{234} (1977), 336--339.



\bibitem{ALMT}L. Alseda, J. Llibre, M. Misiurewicz and C.
Tresser, \emph{ Periods and entropy for Lorenz-like maps},
 Ann. Inst. Fourier, Grenoble, \textbf{39}  (1989), 929--952.


\bibitem{AFS1} J. F. Alves, J. L. Fachada, and J. Sousa Ramos, \emph{A condition for transitivity of Lorenz maps},
 In. {\em Proceedings of the Eighth International Conference on difference equations and
 applications}, 7-13, Chapman $\&$ Hall/CRC, Boca Raton, FL, 2005.






\bibitem{CD}{\sc H. F. Cui and Y. M. Ding}, {\em Conjugacy of piecewise linear Lorenz map that expand on average},
preprint, arXiv:0906.3131 [math.DS], 2009.



\bibitem{DHK}{\sc K. Dajani, Y. Hartono  and C. Kraaikamp},{\em  Mixing properties of
$(\alpha, \beta)$-expansions}, Ergod. Th. $\&$ Dynam. Sys.
\textbf{29},(2009),
 1119-1140.




\bibitem{D}{\sc J. Ding},{\em Absolutely continuous invariant
measure of a piecewise concave mapping of $[0,1]$}, Nonlinear Anal.,
\textbf{28}(1997), 1133-1140.

\bibitem{Dy} {\sc Y. M. Ding},
\emph{ Renormalization and $\alpha$-limit set for expanding Lorenz
map}, preprint, arXiv:math/0703777 [math.DS], 2007.

\bibitem{DF1}{\sc Y. M. Ding and W. T. Fan},
{\em The asymptotic  periodicity of Lorenz maps},  Acta Math. Sci. ,
\textbf{19}(1999),  114-120.

\bibitem{DF2}{\sc Y. M. Ding and W. T. Fan},
{\em The chaotic behavior and statistical stability of a family of
Lorenz maps}, Acta  Math.  Sci. (In chinese),  \textbf{21} (2001),
559-569.




\bibitem{Gel} A. O. Gelfond, \emph{ A common property of number systems},
Izv. Akad. Nauk SSSR. Ser. Mat. \textbf{23}(1959), 809-814.

\bibitem{G}{\sc P. Glendinning},
{\em Topological conjugation of Lorenz maps by
$\beta$-transformations}, Math. Proc. Camb. Phil. Soc., \textbf{107}
(1990), 401-413.




\bibitem{GS}{\sc P. Glendingning and C. Sparrow},
{\em Prime and renormalisable kneading invariants and the dynamics
of expanding Lorenz maps}, Physica D, \textbf{62}(1993), 22-50.

\bibitem{Go1}{\sc P. G$\acute{o}$ra}, {\em Invariant densities for generalized $\beta$-maps}, Ergod.
Th. $\&$ Dynam. Sys.  \textbf{27}(2007),  1583-1598.

\bibitem{Go2}{\sc P. G$\acute{o}$ra. Invariant densities for piecewise linear maps of the unit
interval}, Ergod. Th. $\&$ Dynam. Sys. \textbf{29}(2009), 1549-1583.


\bibitem{GW}{\sc J. Guckenheimer and R. F.  Williams},
{\em Structural stability of Lorenz attractors},  IHES Publ. Math.,
\textbf{50}(1979), 59-72.



\bibitem{Ho}{\sc F. Hofbauer},
{\em The maximal measure for linear mod one transformations}, J.
London Math. Soc. \textbf{23}(1981),  92-112.


\bibitem{Ito}{\sc R. Ito}, {\em Rotation sets are closed}, Math. Proc. Camb. Phil. Soc. \textbf{89}
(1981), 107-111.

\bibitem{LM}{\sc A. Lasota and M. C. Mackey},
{\em  Fractals and noise, stochastic aspects of dynamics}, 2nd
Edition,  Springer-Verlag, New York,  1994.

\bibitem{LY1}{\sc A. Lasota and J. A. Yorke},
{\em  On the existence of invariant measures for piecewise monotonic
transformations},  Trans.  Amer. Math. Soc., \textbf{186}(1973),
481-488.






\bibitem{MS}{\sc W. de Melo and S. van Strien},
{\em  One-dimensional dynamics}, Springer-Verlag, Berlin, 1993.




\bibitem{Pal} M. R. Palmer,
\emph{ On the classification of measure preserving transformations
of Lebesgue spaces}, Ph. D. thesis, University of Warwick , 1979.

\bibitem{P1}{\sc W. Parry},
{\em On the $\beta $-expansions of real numbers},  Acta Math. Acad.
Sci. Hungar., \textbf{11} (1960), 401-416.

\bibitem{P}{\sc W. Parry},
{\em Representations for real numbers},  Acta Math. Acad. Sci.
Hungar., \textbf{15} (1964) 95-105.



\bibitem{P3}{\sc W. Parry},
{\em The Lorenz attractor and a related population model}, Ergodic
theory (Proc. Conf., Math. Forschungsinst., Oberwolfach, 1978),
pp. 169-187,  in Lecture Notes in Math., 729, Springer, Berlin,
1979.



\bibitem{R}{\sc A. Renyi},
{\em Representations for real numbers and their ergodic properties},
Acta Math. Acad. Sci. Hungar, \textbf{8} (1957), 477-493.





\bibitem{W}{\sc K. M. Wilkinson}, {\em Ergodic properties of certain linear mod one
transformations}, Adv. Math. \textbf{14} (1974), 64-72.

\bibitem{Wi}{\sc R. F. Williams},
{\em The structure of Lorenz attractors}, IHES Publ. Math.,
\textbf{50} (1979), 73-99.



\end{thebibliography}
\end{document}